\title{A Symplectic Discretization Based Proximal Point Algorithm for Convex Minimization
\thanks{This work was partially supported by grant 12288201 from NSF of China.
}}
\author[1]{Ya-xiang Yuan}
\author[ 1,2]{Yi Zhang\thanks{Corresponding author: zhangyi2020@lsec.cc.ac.cn}  }
\affil[1]{Institute of Computational Mathematics and Scientific/Engineering Computing, Academy of Mathematics and Systems
	Science, Chinese Academy of Sciences, Beijing 100190, China}
\affil[2]{University of Chinese Academy of Sciences, Beijing 100049, China}
\newcommand{\argmin}{\mathop{\arg\min}}
\newcommand{\inner}[1]{\left\langle#1\right\rangle}
\newcommand{\norm}[1]{\left\|#1\right\|}
\newcommand{\X}{\overset{\cdot}{X}}
\newcommand{\E}{\mathcal{E}}
\newcommand{\EE}{\overset{\cdot}{\mathcal{E}}}
\newcommand{\Z}{\overset{\cdot}{Z}}
\newcommand{\G}{\mathcal{G}}
\newcommand{\hh}{\mathcal{H}}
\newcommand{\dist}{\mathrm{dist}}
\newcommand{\dd}{\mathrm{d}}
\newtheorem{theorem}{Theorem}
\newtheorem{corollary}{Corollary}
\newtheorem{lemma}{Lemma}
\newtheorem{example}{Example}
\newtheorem{assumption}{Assumption}
\begin{document}
\maketitle

\begin{abstract}
The proximal point algorithm plays a central role in non-smooth convex programming. The Augmented Lagrangian Method, one of the most famous optimization algorithms, has been found to be closely related to the proximal point algorithm. Due to its importance, accelerated variants of the proximal point algorithm have received considerable attention. In this paper, we first study an Ordinary Differential Equation (ODE) system, which provides valuable insights into proving the convergence rate of the desired algorithm. Using the Lyapunov function technique, we establish the convergence rate of the ODE system. Next, we apply the Symplectic Euler Method to discretize the ODE system to derive a new proximal point algorithm, called the Symplectic Proximal Point Algorithm (SPPA). By utilizing the proof techniques developed for the ODE system, we demonstrate the convergence rate of the SPPA. Additionally, it is shown that existing accelerated proximal point algorithm can be considered a special case of the SPPA in a specific manner. Furthermore, under several additional assumptions, we prove that the SPPA exhibits a finer convergence rate.
\end{abstract}

\keywords{Proximal point algorithm, Ordinary differential equations, Lyapunov function, Symplectic discretization, Convergence rate analysis}

\section{Introduction}

With the growth of high-dimensional statistics and machine learning, many optimization problems without smooth derivative have come up. The Proximal Point Algorithm (PPA) is a powerful method for dealing with such non-smooth optimization challenges. Initially, PPA was used to the study of regularization of ill-posed problems \cite{krasnoselskii1955} and was found to be closed related to the Moreau envelope \cite{moreau1965}. Over time, it has been applied to handle variational inequalities \cite{martinet1970} and non-smooth convex optimization problems \cite{rockafellar1976}. Nowadays, PPA has been applied to solve a broad range of optimization problems, as demonstrated in \cite{liu2012, wang2010, yang2013}. Notably, it was discovered that the Augmented Lagrangian Method (ALM), another important optimization algorithm, has been proven to be equivalent to the application of PPA to the Lagrangian duality problem \cite{rockafellar1976augmented, rockafellar1978}.  Recently, ALM has found diverse applications in convex optimization, as detailed in \cite{zhao2010, yang2015, li2018, li2018fused, zhang2020}. Several variations of PPA have been explored, such as preconditioned PPA \cite{he2009} and the Bregman PPA \cite{censor1992}. For additional applications of PPA, see \cite{parikh2014}. For a thorough understanding of PPA, we refer to \cite{cai2022}.

In 1983, Nesterov introduced the Nesterov's accelerated gradient (NAG) method \cite{nesterov83}, which exhibits $O(1/k^2)$ rate of convergence and has the same computational complexity as vanilla gradient descent. This significant theoretical result sparked considerable interest in developing accelerated versions of optimization algorithms, as illustrated in \cite{nesterov2005, beck2009}. As a result, the study of accelerated variants of PPA has gained researchers' interest. In \cite{guler1992}, G\"{u}ler employed  the estimation sequence technique, which is firstly used to construct the NAG method in \cite{nesterov83}, to develop an accelerated PPA \cite{guler1992}. Subsequently, he applied the same technique to ALM to derive the accelerated ALM \cite{guler1992augmented}. 

Recently, there has been increased research works towards understanding and analyzing accelerated optimization algorithms from the viewpoint of ordinary differential equations (ODEs). Originally, \cite{su2016} indicated that the NAG method is closely related to a second-order ODE and used such ODE to establish an Lyapunov function framework, which demonstrates the $O(1/k^2)$ convergence rate of NAG method. Following this insight, a number of studies have focused on interpreting accelerated optimization algorithms through the ODE perspective, such as \cite{attouch1996, attouch2016fast, alvarez2002, chen2022, krichene2015, shi2022}. Moreover, there are some works that obtain new accelerated optimization algorithms by applying numerical algorithms for solving ODEs. For example, \cite{zhang2018} developed a new accelerated gradient method by using the Runge-Kutta method on the relevant ODEs. Similarly, \cite{betancourt2018} used the symplectic integrator method on ODEs to obtain a new first-order accelerated method. Additionally, some works propose new ODEs and then use difference schemes to derive new accelerated algorithms, 
as exemplified by \cite{bot2023ogda, bot2023fast, bot2023augmented}.

\subsection{Existing Results}

Let \( \mathcal{H} \) be a real Hilbert space, and let \( \Gamma_0(\mathcal{H}) \) denote the set of all closed, proper, convex functions defined on $\mathcal{H}$. The inner product on \( \mathcal{H} \) is denoted by \( \langle \cdot, \cdot \rangle \), and the corresponding norm is \( \|\cdot\| = \sqrt{\langle \cdot, \cdot \rangle} \). Suppose that \( L: \mathcal{H} \to \mathcal{H} \) is a positive definite operator. The inner product with respect to \( L \) is given by \( \langle x, y \rangle_L = \langle Lx, y \rangle \), and the norm with respect to \( L \) is given by \( \|x\|_L = \sqrt{\langle Lx, x \rangle} \). 

Throughout this paper, we consider the following optimization problem:
\begin{equation}
	\min_{x\in\mathcal{H}} f(x),
	\label{eq:main}
\end{equation}
where \( f \in \Gamma_0(\mathcal{H}) \). Here, we assume that the set of all minimizers of $f$, labeled as $\Omega$, is non-empty. The preconditioned proximal point algorithm (PPA) with respect to the preconditioner \( L \) is given by:
\begin{equation}
	x_{k+1} = \argmin_{x\in\mathcal{H}} \left\{ f(x) + \frac{1}{2\rho_k} \|x - x_k\|^2_L \right\},
	\label{eq:ppa}
\end{equation}
where \( \{\rho_k\} \) is a sequence in \( (0, \infty) \). If \( L \) is the identity operator \( I \), then \eqref{eq:ppa} reduces to the standard PPA.

In \cite{guler1991}, G\"{u}ler proved that the convergence rate of the standard PPA is:
\[
f(x_k) - f^* \leqslant \frac{\dist_I(x_0, \Omega)^2}{2 \sum_{i=0}^{k-1} \rho_i},
\]
where \( f^* \) is the minimum value of \( f \), and \( \dist_L(x_0, \Omega) = \inf_{x \in \Omega} \|x_0 - x\|_L \).

In \cite{guler1992}, G\"{u}ler proposed the following accelerated PPA given as follows:\begin{algorithm}[ht]
	\caption{Accelerated PPA}
	\label{al:appa}
	\textbf{Initialization:} Choose an initial point $x_0$ such that $f(x_0)<\infty$, and constants $\rho_0>0$ and $A>0$. Define $v_0:=x_0, A_0=A$\;
	
	\For{$k=0, 1, \cdots$}{
		Choose $\rho_k>0$\;
		$\alpha_k=\dfrac{\sqrt{(A_k\rho_k)^2+4A_k\rho_k}-A_k\rho_k}{2}$\;
		$y_k=(1-\alpha_k)x_k+\alpha_kv_k$\;
		$x_{k+1}=\argmin_x\left\{f(x)+\dfrac{1}{2\rho_k}\norm{x-y_k}^2\right\}$\;
		$v_{k+1}=v_k+\dfrac{1}{\alpha_k}(x_{k+1}-y_k)$\;
		$A_{k+1}=(1-\alpha_k)A_k$\;
	}
\end{algorithm}

Also, G\"{u}ler proved that the convergence rate of Algorithm \ref{al:appa} is 
\begin{equation}
	\label{eq:apparate}
	f(x_k)-f^*\leqslant\frac{4}{A(\sum_{i=0}^{k-1}\sqrt{\rho_i})^2}\left(f(x_0)-f^*+\frac{A}{2}\dist_I(x_0,\Omega)^2\right).
\end{equation}

\subsection{Our Contributions}
In this paper, we aim to propose a new accelerated preconditioned PPA by employing the Symplectic Euler Method to discretize a first-order ODE. Owing to the usage of Symplectic Euler Method, we call this new PPA as \emph{Symplectic Proximal Point Algorithm (SPPA)}. The details of the development and theory of SPPA are provided as follows.

First, inspired by the ODE used to analyze the accelerated mirror descent method in \cite{yi23}, we propose the following ODE:
\begin{equation}
	\label{eq:continuousmain}
	\begin{split}
		Z &= b_t \dot{X} + c_t L^{-1} \nabla f(X) + X, \\
		\dot{Z} &= -a_t L^{-1} \nabla f(X), \\
		Z(0) &= X(0) = x_0,
	\end{split}
\end{equation}
where \( f \in \Gamma_0(\mathcal{H}) \) is continuously differentiable, and \( a_t, b_t, \) and \( c_t \) are functions defined on \([0, +\infty)\) such that they are positive on \((0, +\infty)\) and non-negative at \( 0 \). Also, we assume that $a_tb_t$ is differentiable respect to $t$. By applying the Lyapunov function technique, we first prove that the convergence rate of the solution trajectory to \eqref{eq:continuousmain} is \( O(1/a_t b_t) \) under the assumption that \( 0\leqslant\dfrac{d}{dt}(a_t b_t) \leqslant a_t \). Additionally, we demonstrate that the convergence rate of the solution trajectory to \eqref{eq:continuousmain} is \( o(1/a_t b_t) \) under some additional assumptions.

Next, we apply the Symplectic Euler Method to \eqref{eq:continuousmain} to obtain our SPPA. The motivation for applying the Symplectic Euler Method comes from the works in \cite{shi2019, shi2022}. In these works, it was shown that the NAG method can be viewed as applying the symplectic method to the high-resolution ODEs formulated in phase space representation. Since the Symplectic Euler Method can preserve geometric structure of the ODEs and has a simple iteration rule, we use the Symplectic Euler Method to derive our algorithm. 

First, we give a concise introduction to the Symplectic Euler Method. Consider the following Hamiltonian system:

Consider the following Hamiltonian system:\begin{equation}
	\begin{split}
		\dot{p} &= -\frac{\partial H(p,q)}{\partial q},\\
		\dot{q} &= \frac{\partial H(p,q)}{\partial p}.
	\end{split}
	\label{eq:hamitonian}
\end{equation}
The Symplectic Euler Method  for solving \eqref{eq:hamitonian} is given as follows:
\begin{equation}
	\begin{split}
		p_{k+1} &= p_k - s\frac{\partial H(p_k,q_{k+1})}{\partial q},\\
		q_{k+1} &= q_k + s\frac{\partial H(p_k,q_{k+1})}{\partial p},
	\end{split}\quad\text{or}\quad\begin{split}
		p_{k+1} &= p_k - s\frac{\partial H(p_{k+1},q_k)}{\partial q},\\
		q_{k+1} &= q_k + s\frac{\partial H(p_{k+1},q_k)}{\partial p},
	\end{split}
	\label{eq:symplecticeuler}
\end{equation}
where $s>0$ is the step-size of Symplectic Euler Method. Because one variable takes an explicit step and the other variable takes an implicit step, this method is also referred to as the Semi-Implicit Euler Method. To acquire more acknowledge about symplectic methods, we refer to \cite{hairer06} and \cite{feng10}. The introduction of Symplectic Euler Method can be found in I.1.2 in \cite{hairer06}.
\begin{figure}[ht]
	\centering
	\subfigure{\includegraphics[width=.3\textwidth]{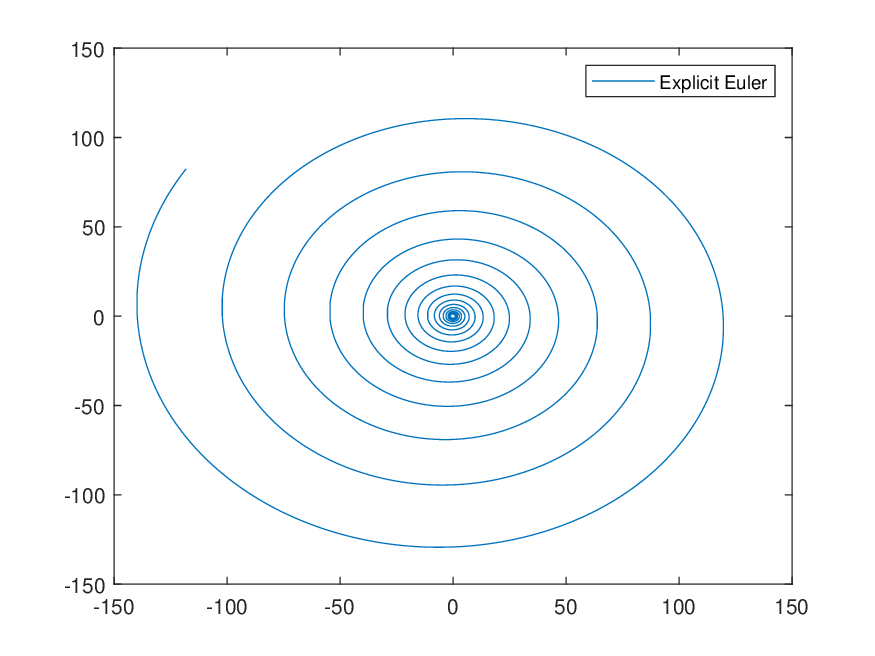}}
	\subfigure{\includegraphics[width=.3\textwidth]{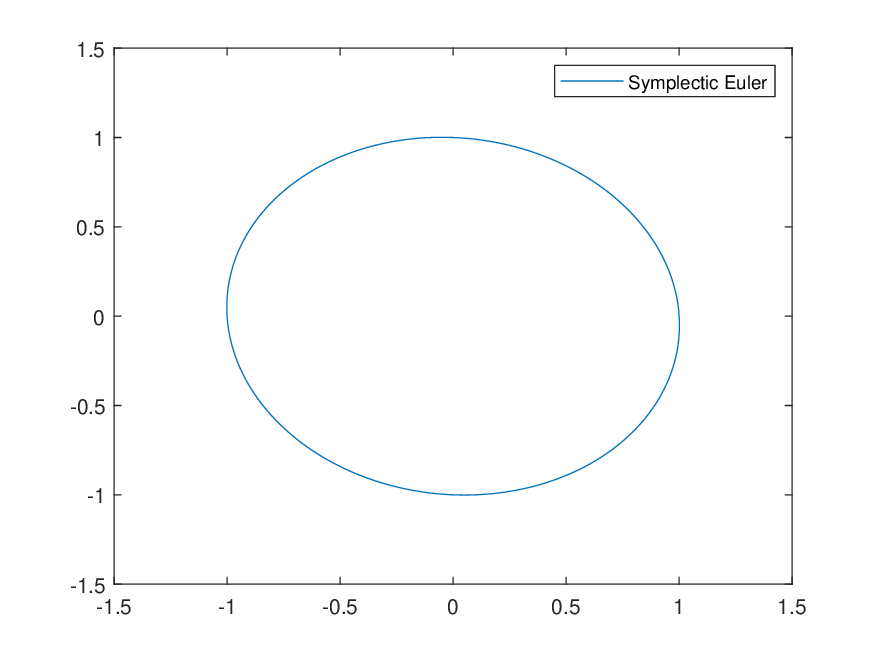}}
	\subfigure{\includegraphics[width=.3\textwidth]{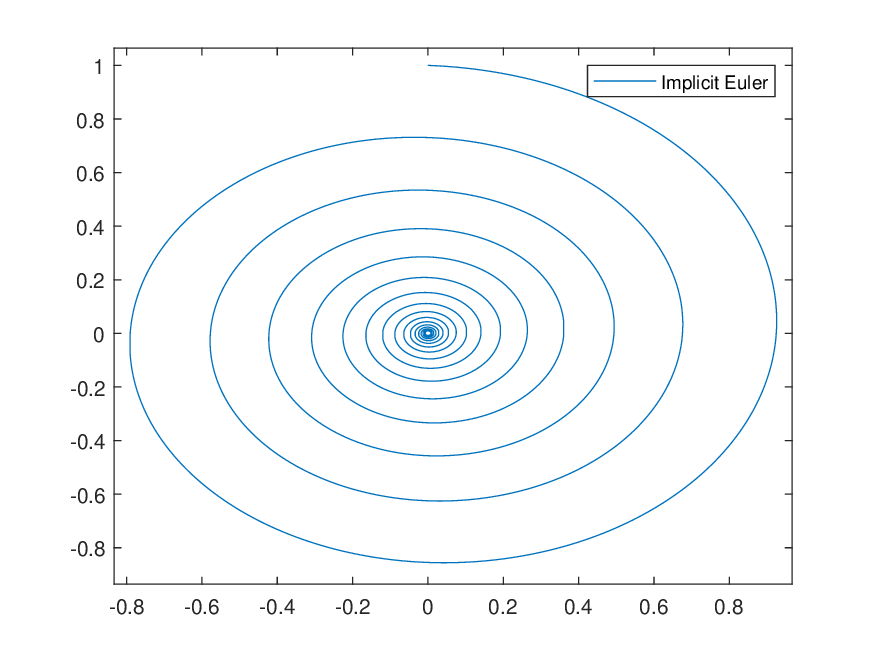}}
	\caption{Comparison between some existing Euler methods. Here we apply the Explicit Euler Method, Symplectic Euler Method and Implicit Euler Method to the Hamiltonian system $\dot{p}=q$, $\dot{q}=-p$ with initial $p(0)=0$ and $q(0)=1$. The solution to such Hamiltonian system is $p=\sin t$, $q=\cos t$. As we can see from these three figures, the Symplectic Euler method preserve the structure of the solution.}
\end{figure}

Here we apply the Symplectic Euler Method to discretize \eqref{eq:continuousmain}, where $Z$ takes the explicit step, $X$ takes the implicit step, and $s=1$. Then, we obtain the following recursive rule:
\begin{equation}
	\label{eq:discretemain}
	\begin{split}
		z_0&=x_0,\\
		z_k&=b_k(x_{k+1}-x_k)+c_kL^{-1}\nabla f(x_{k+1})+x_{k+1},\\
		z_{k+1}&=z_k-a_kL^{-1}\nabla f(x_{k+1}),
	\end{split}
\end{equation}
where the sequences $\{a_k\}$, $\{b_k\}$ and $\{c_k\}$ are positive if $k\geqslant 1$ and non-negative if $k=0$. However, the convergence results of \eqref{eq:ppa} do not require $f$ to be differentiable. Therefore, we need to eliminate the usage of $\nabla f$ from \eqref{eq:discretemain}. By using the first-order characterization of the convex optimization problem, the update rule of $x_{k+1}$ can be transformed into:\begin{align*}
	\tilde{x}_{k+1} &= \frac{1}{b_k+1}z_k+\frac{b_k}{b_k+1}x_k, \\
	x_{k+1} & = \argmin_x\left\{f(x)+\frac{b_k+1}{2c_k}\norm{x-\tilde{x}_{k+1}}^2_L\right\}.
\end{align*}
Additionally, using the relation $L^{-1}\nabla f(x_{k+1})=c_k^{-1}[b_kx_k+z_k-(b_k+1)x_{k+1}]$, the update rule for $z_{k+1}$ can be rewritten as:\[
z_{k+1}=z_k+\frac{a_k(b_k+1)}{c_k}(x_{k+1}-\tilde{x}_{k+1}).
\]
In conclusion, we obtain the following algorithm.
\begin{algorithm}
	\caption{Symplectic Proximal Point Algorithm}
	\label{al:spppa}
	\For{$k=0, 1, \cdots$}{
		\ Choose $a_k$, $b_k$ and $c_k$ such that all of them are positive if $k\geqslant 1$ and non-negative for $k=0$\;
		\ $\tilde{x}_{k+1} = \dfrac{1}{b_k+1}z_k+\dfrac{b_k}{b_k+1}x_k$\;
		\ $x_{k+1} = \argmin\limits_{x\in\mathcal{H}}\left\{f(x)+\dfrac{b_k+1}{2c_k}\norm{x-\tilde{x}_{k+1}}^2_L\right\}$\;
		\ $z_{k+1}=z_k+\dfrac{a_k(b_k+1)}{c_k}(x_{k+1}-\tilde{x}_{k+1})$\;
	}
\end{algorithm}

Theoretically, by applying the Lyapunov function technique, we prove that the convergence rate of Algorithm \ref{al:spppa} is \( O(1/a_k b_k) \) under the assumptions that \( 0 \leqslant a_{k+1} b_{k+1} - a_k b_k \leqslant a_k \) and \( c_k \geqslant \dfrac{c_k}{2} \). Under these conditions, we demonstrate that SPPA can be seen as a generalization of A-PPA. Moreover, the corresponding convergence rate of SPPA is shown to be faster than that of A-PPA. This enhanced convergence rate may be attributed to the structure-preserving property of the Symplectic Euler Method. Additionally, under some extra assumptions, we prove that the convergence rate of Algorithm \ref{al:spppa} is \( o(1/a_k b_k) \).

\section{Convergence Rate of ODE}
\label{sec:2}

In this section, we study the convergence rate of \eqref{eq:continuousmain}. For simplicity, when proving the theorems in this section, we use the notations \( X \), \( Z \), \( \X \), and \( \Z \) to denote \( X(t) \), \( Z(t) \), \( \X(t) \), and \( \Z(t) \) respectively.

First, we need to establish the \( O(1/a_t b_t) \) convergence rate. To achieve this, we propose the following Lyapunov function:
\begin{equation}
	\E(t) := A_t [f(X(t)) - f(x^*)] + \frac{1}{2} \|Z(t) - x^*\|_L^2,
	\label{eq:lyapunovcontinuous}
\end{equation}
where \( x^* \in \Omega \). Here, \( A_t [f(X(t)) - f(x^*)] \) represents the potential term of \( \E(t) \), and \( A_t \) indicates the convergence rate of the solution trajectory to \eqref{eq:continuousmain}. \( \dfrac{1}{2} \|Z(t) - x^*\|_L^2 \) is the mixed term. By analyzing \eqref{eq:lyapunovcontinuous}, we obtain the following result:

\begin{theorem}
	\label{thm:continuousrate}
	Let $(X(t),Z(t))$ be the solution to \eqref{eq:continuousmain}. If  \begin{equation}
		A_t=a_tb_t,\qquad 0\leqslant\dot{A}_t\leqslant a_t,
		\label{eq:continuouscondition}
	\end{equation}
	and $f\in\Gamma_0(\hh)$ is continuous differentiable, then $\E(t)$ is non-increasing. And we have
	\begin{equation}
		f(X(t))-f^* \leqslant \frac{A_0}{A_t}[f(x_0)-f^*]+\frac{\dist_L(x_0,\Omega)^2}{2A_t}.
		\label{eq:continuousrate}
	\end{equation}
	Additionally, we have
	\begin{gather}
		\int_{0}^{\infty}(a_t-\dot{A}_t)\inner{\nabla f(X(t)),X(t)-x^*}\dd t\leqslant A_0[f(x_0)-f^*]+\frac{1}{2}\norm{x_0-x^*}^2_L,\ \forall x^*\in\Omega,\label{eq:integralvalue}\\
		\int_{0}^{\infty}a_tc_t\norm{\nabla f(X(t))}^2\dd t\leqslant A_0[f(x_0)-f^*]+\frac{1}{2}\dist_L(x_0,\Omega)^2.\label{eq:integralnorm}
	\end{gather}

\end{theorem}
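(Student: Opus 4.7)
The plan is to take the time derivative of the Lyapunov function $\mathcal{E}(t)$, collect terms using the two ODE identities, and show $\dot{\mathcal{E}}(t) \leqslant 0$ under hypothesis \eqref{eq:continuouscondition}. Differentiating \eqref{eq:lyapunovcontinuous} gives
\[
\dot{\mathcal{E}}(t) = \dot{A}_t[f(X)-f(x^*)] + A_t\langle\nabla f(X),\dot{X}\rangle + \langle L(Z-x^*),\dot{Z}\rangle.
\]
I would substitute $\dot{Z} = -a_t L^{-1}\nabla f(X)$ into the last term, causing the $L$ and $L^{-1}$ to cancel and leaving $-a_t\langle Z-x^*, \nabla f(X)\rangle$. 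Then I would split $Z-x^* = (X-x^*) + (Z-X)$ and use the first ODE equation $Z-X = b_t\dot{X} + c_t L^{-1}\nabla f(X)$ to rewrite this as $-a_t\langle X-x^*,\nabla f(X)\rangle - a_tb_t\langle \dot{X},\nabla f(X)\rangle - a_tc_t\langle L^{-1}\nabla f(X),\nabla f(X)\rangle$.

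The critical cancellation is that the hypothesis $A_t = a_tb_t$ forces the term $A_t\langle\nabla f(X),\dot{X}\rangle$ to annihilate the term $-a_tb_t\langle\dot{X},\nabla f(X)\rangle$. What remains is
\[
\dot{\mathcal{E}}(t) = \dot{A}_t[f(X)-f(x^*)] - a_t\langle\nabla f(X), X-x^*\rangle - a_tc_t\langle L^{-1}\nabla f(X),\nabla f(X)\rangle.
\]
Invoking convexity in the sharp form $\langle\nabla f(X),X-x^*\rangle \geqslant f(X)-f(x^*) \geqslant 0$, and using $0 \leqslant \dot{A}_t \leqslant a_t$, we get
\[
\dot{\mathcal{E}}(t) \leqslant -(a_t-\dot{A}_t)[f(X)-f(x^*)] - a_tc_t\langle L^{-1}\nabla f(X),\nabla f(X)\rangle \leqslant 0,
\]
so $\mathcal{E}(t)$ is non-increasing. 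Dropping the non-negative mixed term from $\mathcal{E}(t) \leqslant \mathcal{E}(0)$ and dividing by $A_t$ then taking the infimum over $x^*\in\Omega$ yields \eqref{eq:continuousrate}.

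For the two integral estimates, I would use the slightly stronger bound $\dot{\mathcal{E}}(t) \leqslant -(a_t-\dot{A}_t)\langle\nabla f(X),X-x^*\rangle - a_tc_t\langle L^{-1}\nabla f(X),\nabla f(X)\rangle$ (still valid since $\dot{A}_t\geqslant 0$ and convexity gives $\langle\nabla f(X),X-x^*\rangle \geqslant f(X)-f(x^*)$). Integrating from $0$ to $T$, using $\mathcal{E}(T) \geqslant 0$ and sending $T\to\infty$ gives
\[
\int_0^\infty (a_t-\dot{A}_t)\langle\nabla f(X),X-x^*\rangle\,\dd t + \int_0^\infty a_tc_t\langle L^{-1}\nabla f(X),\nabla f(X)\rangle\,\dd t \leqslant \mathcal{E}(0),
\]
and since each integrand is non-negative, each integral is individually bounded by $\mathcal{E}(0) = A_0[f(x_0)-f^*] + \frac{1}{2}\|x_0-x^*\|_L^2$, which gives \eqref{eq:integralvalue} and \eqref{eq:integralnorm}.

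The main obstacle I foresee is bookkeeping: identifying the precise algebraic cancellation enforced by $A_t = a_tb_t$ and making sure every cross term is accounted for, particularly the one involving $\dot{X}$, which has no natural sign. Once this cancellation is pinned down, the remainder is convexity plus integration. A minor notational point is that the natural quantity appearing in the integrand of \eqref{eq:integralnorm} is $\langle L^{-1}\nabla f(X),\nabla f(X)\rangle$, which I would interpret as the squared $L^{-1}$-norm of $\nabla f(X)$ in order to match the statement.
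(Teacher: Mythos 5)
Your proposal is correct and follows essentially the same route as the paper: differentiate $\E(t)$, substitute both ODE relations so that $A_t=a_tb_t$ cancels the $\langle\nabla f(X),\dot X\rangle$ cross term, apply convexity and $0\leqslant\dot A_t\leqslant a_t$ to get $\dot\E\leqslant 0$, and then integrate the resulting upper bound on $\dot\E$ to obtain the two integral estimates. Your closing remark about interpreting the integrand of \eqref{eq:integralnorm} as $\|\nabla f(X)\|_{L^{-1}}^2$ matches what the paper's own proof actually writes.
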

\begin{proof}
	Step 1: Estimating the upper bound of $\EE(t)$. By calculating the derivative of \eqref{eq:lyapunovcontinuous}, we have\begin{align*}
		&\ \EE(t)=\dot{A}_t[f(X)-f(x^*)]+A_t\inner{\nabla f(X),\X}+\inner{\Z,Z-x^*}_L \\
		=&\ \dot{A}_t[f(X)-f(x^*)]+A_t\inner{\nabla f(X),\X}-a_t\inner{\nabla f(X),b_t\X+c_tL^{-1}\nabla f(X)+X-x^*}\\
		=&\  \dot{A}_t[f(X)-f(x^*)]-a_t\inner{\nabla f(X),X-x^*}-a_tc_t\norm{\nabla f(X)}^2_{L^{-1}}.
	\end{align*}
	Since $f$ is convex, we have\[
	f(X)-f(x^*)\leqslant\inner{\nabla f(X),X-x^*}.
	\]
	Thus\[
	\EE(t)\leqslant (\dot{A}_t-a_t)\inner{\nabla f(X(t)),X(t)-x^*}-a_tc_t\norm{\nabla f(X)}_{L^{-1}}^2\leqslant 0.
	\]
	
	Step 2: Deducing the convergence rates. Because $\E(t)$ is non-increasing, we have\[
	A_t[f(X)-f(x^*)]\leqslant\E(t)\leqslant\E(0)=A_0[f(x_0)-f(x^*)]+\frac{1}{2}\norm{x_0-x^*}_L^2.
	\]
	By dividing $A_t$ on both sides of the above inequality and  taking the infimum over to all $x^*\in\Omega$, we obtain \eqref{eq:continuousrate}.
	
	Next, by integrating $\EE(t)$ from $0$ to $\infty$, we have\begin{gather}
		\int_{0}^{\infty}(a_t-\dot{A}_t)\inner{\nabla f(X(t)),X(t)-x^*}\dd t  \leqslant-\int_{0}^{\infty}\EE(t)\dd t\leqslant \E(0), \label{eq:gather1}\\
		\int_{0}^\infty a_tc_t\norm{\nabla f(X(t))}_{L^{-1}}^2\dd t \leqslant -\int_{0}^{\infty}\EE(t)\dd t\leqslant \E(0).
	\end{gather}
	It is obvious that \eqref{eq:gather1} is exactly \eqref{eq:integralvalue}. Also, By taking the infimum over all $x^*\in\Omega$, we obtain \eqref{eq:integralnorm}.
\end{proof}

Next, we introduce some feasible choices for $a_t$, $b_t$, and $c_t$.

\begin{example}
	\label{ex:continuouspolynomial}
	First, we present an instance of \eqref{eq:continuousmain} with an $O(1/t^p)$ convergence rate. Let \( A_t = t^p \), where \( p \geqslant 1 \). Then \( a_t = \dot{A}_t = pt^{p-1} \), \( b_t = tp^{-1} \), and \( c_t = a_t \). These choices satisfy the assumptions in Theorem \ref{thm:continuousrate}. By Theorem \ref{thm:continuousrate}, the solution trajectory to \eqref{eq:continuousmain} is \( O(1/t^p) \).
\end{example}

\begin{example}
	\label{ex:continuousexponential}
	Next, we derive an instance of \eqref{eq:continuousmain} with an $O(\exp(-\lambda t))$ convergence rate. Let \( A_t = \exp(\lambda t) \), where \( \lambda > 0 \). Then \( a_t = \dot{A}_t = \lambda \exp(\lambda t) \), \( b_t = \lambda^{-1} \), and \( c_t = a_t \). These choices satisfy the assumptions in Theorem \ref{thm:continuousrate}. By Theorem \ref{thm:continuousrate}, the solution trajectory to \eqref{eq:continuousmain} is \( O(\exp(-\lambda t)) \).
\end{example}

In the remainder of this section, we discuss the conditions under which the solution trajectory to \eqref{eq:continuousmain} achieves a convergence rate of \( o(1/A_t) \). Inspired by the proof of the \( o(1/k^2) \) convergence rate of the NAG method, we find that \eqref{eq:integralvalue} is critical for proving the \( o(1/A_t) \) convergence rate of \eqref{eq:continuousmain}. To make \eqref{eq:integralvalue} meaningful, we introduce the following assumption:

\begin{assumption}
	\label{ass:continuous1}
	There exists a constant \( d \in (0, 1) \) such that \( 0 \leqslant \dot{A}_t \leqslant d a_t \).
\end{assumption}

Let us revisit Example \ref{ex:continuousexponential}. In this case, \( a_t \) is of the same order as \( A_t \). Thus, if Assumption \ref{ass:continuous1} holds, then by \eqref{eq:integralvalue}, we can easily prove the \( o(1/A_t) \) convergence rate of \eqref{eq:continuousmain}. The resulting theorem is given as follows:

\begin{theorem}
	\label{thm:oecontinuousrate}
	Let \((X(t), Z(t))\) be the solution to \eqref{eq:continuousmain}. If the following conditions hold:
	\begin{itemize}
		\item \( A_t = a_t b_t \) and Assumption \ref{ass:continuous1} holds,
		\item there exists a positive constant $\gamma$ and a non-negative constant $t_0$ such that \( a_t \geqslant \gamma A_t \) for all \( t \geqslant t_0 \),
	\end{itemize}
	then
	\[
	\lim_{t \to \infty} A_t [f(X(t)) - f^*] = 0.
	\]
\end{theorem}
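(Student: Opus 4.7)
The plan is to combine the integrability consequences of Theorem~\ref{thm:continuousrate} (sharpened by Assumption~\ref{ass:continuous1}) with the new lower bound on $a_t$, and then upgrade the resulting $L^1$ bound to pointwise decay. Substituting $a_t - \dot{A}_t \geq (1-d) a_t$ from Assumption~\ref{ass:continuous1} together with the convexity bound $\inner{\nabla f(X), X - x^*} \geq f(X) - f^*$ into \eqref{eq:integralvalue} gives
\[
(1-d)\int_0^\infty a_t[f(X(t)) - f^*]\,\dd t \leq \E(0).
\]
Invoking the hypothesis $a_t \geq \gamma A_t$ on $[t_0, \infty)$ then yields $g \in L^1([t_0,\infty))$, where $g(t) := A_t[f(X(t)) - f^*]$, with the explicit bound $\int_{t_0}^\infty g(t)\,\dd t \leq \E(0)/[(1-d)\gamma]$.

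The remaining task is to upgrade this $L^1$ bound to the pointwise statement $g(t) \to 0$. My strategy is to first prove that $g$ has a limit at infinity; since $g \geq 0$ and is integrable on a neighborhood of infinity, that limit is forced to be $0$, for otherwise $g \geq g_\infty/2$ eventually would contradict integrability. To show $g$ converges, I would use the decomposition $g(t) = \E(t) - \phi(t)$ with $\phi(t) := \tfrac{1}{2}\norm{Z(t) - x^*}_L^2$. By Theorem~\ref{thm:continuousrate}, $\E(t)$ is non-increasing and bounded below by $0$, hence converges to some $\E_\infty$; so the problem reduces to proving $\phi(t)$ converges.

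The main obstacle is establishing convergence of $\phi$. The route I would try first is to show bounded variation, starting from the identity $\dot\phi(t) = -a_t\inner{\nabla f(X(t)), Z(t) - x^*}$, using the Lyapunov-based bound $\norm{Z(t) - x^*}_L \leq \sqrt{2\E(0)}$, and applying Cauchy--Schwarz against $\int_0^\infty a_t c_t \norm{\nabla f(X)}_{L^{-1}}^2\,\dd t < \infty$ from \eqref{eq:integralnorm}; this controls $\int|\dot\phi|\,\dd t$ provided a mild auxiliary integrability relating $a_t$ and $c_t$ holds. An alternative I would pursue in parallel is Barb\u{a}lat's lemma applied directly to $g$: differentiating via the ODE \eqref{eq:continuousmain} gives $\dot g = \dot A_t[f(X) - f^*] + a_t\inner{\nabla f(X), Z - X} - a_t c_t\norm{\nabla f(X)}_{L^{-1}}^2$, and uniform boundedness of each summand (using the already-derived integral bounds and $g \leq \E(0)$) would make $g$ uniformly continuous, so that $g \in L^1$ with $g \geq 0$ forces $g \to 0$. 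Either route requires teasing some extra regularity out of the ODE structure beyond what Theorem~\ref{thm:continuousrate} directly provides, and this is the step where I expect the proof to require the most care.
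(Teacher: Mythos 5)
Your first half reproduces the paper's argument exactly: combining \eqref{eq:integralvalue} with $a_t-\dot{A}_t\geqslant(1-d)a_t$ from Assumption \ref{ass:continuous1} and the convexity inequality $f(X)-f^*\leqslant\inner{\nabla f(X),X-x^*}$ yields $\int_0^\infty a_t[f(X(t))-f^*]\,\dd t<\infty$, and the hypothesis $a_t\geqslant\gamma A_t$ transfers this to $g(t):=A_t[f(X(t))-f^*]$. (The paper invokes $a_t\geqslant\gamma A_t$ after passing to the limit of $a_t[f(X(t))-f^*]$ rather than inside the integral, but that difference is immaterial.) Where you diverge is that the paper simply asserts $\lim_{t\to\infty}a_t[f(X(t))-f^*]=0$ from integrability alone; you are right that a nonnegative $L^1$ function need not tend to zero, so you have correctly isolated the one delicate point.

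However, neither of your two routes for the pointwise upgrade closes under the stated hypotheses, so the proposal ends in a genuine gap. For route (a), the Cauchy--Schwarz comparison of $\int a_t\norm{\nabla f(X)}_{L^{-1}}\,\dd t$ against \eqref{eq:integralnorm} requires $\int_0^\infty a_t/c_t\,\dd t<\infty$; but in the paper's own motivating instance for this theorem (Example \ref{ex:continuousexponential}, where $c_t=a_t$) that ratio is identically $1$, so the ``mild auxiliary integrability'' you hope for fails precisely in the case the theorem is meant to cover. Moreover the decomposition $g=\E-\phi$ is essentially circular: writing $a_t\inner{\nabla f(X),Z-X}=A_t\frac{\dd}{\dd t}[f(X)-f^*]+a_tc_t\norm{\nabla f(X)}^2_{L^{-1}}$ and integrating by parts just reconstitutes $\E(T)=\E(0)+\int_0^T\dot{\E}(t)\,\dd t$, which gives convergence of $\E$ (already known from monotonicity) without separating $\phi$ from $g$. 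For route (b), Barb\u{a}lat's lemma needs a uniform bound on $\dot g$, but the terms $a_t\inner{\nabla f(X),Z-X}$ and $a_tc_t\norm{\nabla f(X)}^2_{L^{-1}}$ are controlled by Theorem \ref{thm:continuousrate} only in an integrated sense, not pointwise. In fairness, the paper's proof leaps over exactly this same step with no justification at all --- one would expect an appeal to something like Lemma \ref{lem:existencelimit} applied to $t\mapsto a_t[f(X(t))-f^*]$, which would itself require an $L^1$ majorant of the derivative that is not supplied --- so the step you flagged as requiring the most care is a hole in the paper's argument as written, not merely in yours.
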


\begin{proof}
	By Assumption \ref{ass:continuous1}, we have that\[
	\int_{0}^{\infty}(1-d)a_t[f(X(t))-f^*]\dd t\leqslant\int_{0}^{\infty}(1-d)a_t\inner{\nabla f(X(t)),X(t)-x^*}\dd t<\infty.
	\]
	Then we have\[
	\lim_{t\to\infty}a_t[f(X(t))-f^*]=0.
	\]
	Because $a_t\geqslant \gamma A_t$ for all $t\geqslant t_0$, we can obtain\[
	\lim_{t\to\infty}A_t[f(X(t))-f^*]=0.
	\]
\end{proof}

However, if \( a_t \) is an infinitesimal of higher order than \( A_t \), such as the one demonstrated in Example \ref{ex:continuouspolynomial}, we need some extra assumptions and inductions to obtain the \( o(1/A_t) \) convergence rate. First, we introduce the following useful assumption and lemma:

\begin{assumption}
	\label{ass:continuous2}
	\( c_t \) is differentiable and \( \sup\limits_{t > 0} \dfrac{\dot{c}_t}{a_t} < \infty \).
\end{assumption}

\begin{lemma}
	\label{lem:continuouso}
	Let \((X(t), Z(t))\) be the solution to \eqref{eq:continuousmain}, and let \(\G(t)\) be an auxiliary function defined as
	\begin{equation}
		\G(t) := c_t [f(X(t)) - f(x^*)] + \frac{1}{2} \|X(t) - x^*\|^2_L - \langle X(t) - x^*, Z(t) - x^* \rangle_L.
	\end{equation}
	If Assumptions \ref{ass:continuous1} and Assumption \ref{ass:continuous2} hold, and \( f \in \Gamma_0(\mathcal{H}) \) is continuously differentiable, then we have
	\[
	\E_\alpha := \E + \alpha \G \text{ is non-increasing and non-negative}, \  \forall \alpha \in \left(0, \frac{1 - d}{1 + \sup_{t > 0} \frac{\dot{c}_t}{a_t}}\right).
	\]
	Additionally, we have that $\norm{X(t)-x}^2_L$ is uniformly bounded, and
	\begin{equation}
		\int_{0}^\infty b_t \|\X(t)\|^2_L \, dt < \infty.
		\label{eq:integralvelocity}
	\end{equation}
\end{lemma}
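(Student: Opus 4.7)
The plan is to derive a clean identity for $\dot{\G}(t)$ along the flow, add it to the exact expression for $\EE(t)$ obtained inside the proof of Theorem~\ref{thm:continuousrate}, and then choose $\alpha$ small enough that the resulting cross-term is absorbed. Non-negativity and the auxiliary conclusions will then follow from elementary manipulations of $\E_\alpha$.

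First I would differentiate $\G$ term by term, substituting the two ODE identities $Z - X = b_t\X + c_tL^{-1}\nabla f(X)$ and $\Z = -a_tL^{-1}\nabla f(X)$. The key cancellation is
\[
\inner{\X, X - x^*}_L - \inner{\X, Z - x^*}_L = -\inner{\X, Z - X}_L = -b_t\norm{\X}_L^2 - c_t\inner{\X, \nabla f(X)},
\]
in which the trailing $c_t\inner{\X, \nabla f(X)}$ cancels exactly against the derivative of $c_t[f(X) - f^*]$. Combined with $-\inner{X - x^*, \Z}_L = a_t\inner{\nabla f(X), X - x^*}$, this yields
\[
\dot{\G}(t) = \dot{c}_t[f(X) - f^*] + a_t\inner{\nabla f(X), X - x^*} - b_t\norm{\X}_L^2.
\]
Adding $\alpha\dot{\G}(t)$ to the identity $\EE(t) = \dot{A}_t[f(X) - f^*] - a_t\inner{\nabla f(X), X - x^*} - a_tc_t\norm{\nabla f(X)}_{L^{-1}}^2$ read off from the proof of Theorem~\ref{thm:continuousrate}, and applying convexity to the $(\dot{A}_t+\alpha\dot{c}_t)$-weighted term, produces
\[
\EE_\alpha(t) \leqslant \bigl[\dot{A}_t + \alpha\dot{c}_t - (1-\alpha)a_t\bigr]\inner{\nabla f(X), X - x^*} - a_tc_t\norm{\nabla f(X)}_{L^{-1}}^2 - \alpha b_t\norm{\X}_L^2.
\]
Under Assumptions~\ref{ass:continuous1} and~\ref{ass:continuous2}, the bracket is $\leqslant 0$ precisely when $\alpha(1 + \sup_{t>0}\dot{c}_t/a_t) \leqslant 1 - d$, which is the stated range, and then $\EE_\alpha \leqslant 0$ because $\inner{\nabla f(X), X - x^*} \geqslant 0$ by convexity.

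For non-negativity I would expand $\E_\alpha$ and complete the square in the quadratic part:
\[
\tfrac{1}{2}\norm{Z - x^*}_L^2 - \alpha\inner{X - x^*, Z - x^*}_L + \tfrac{\alpha}{2}\norm{X - x^*}_L^2 = \tfrac{1}{2}\norm{Z - x^* - \alpha(X - x^*)}_L^2 + \tfrac{\alpha(1-\alpha)}{2}\norm{X - x^*}_L^2.
\]
Since $\alpha \in (0,1)$ in the admissible range and $A_t + \alpha c_t \geqslant 0$, every summand of $\E_\alpha$ is non-negative. Uniform boundedness of $\norm{X - x^*}_L^2$ then drops out of $\tfrac{\alpha(1-\alpha)}{2}\norm{X - x^*}_L^2 \leqslant \E_\alpha(t) \leqslant \E_\alpha(0)$, and \eqref{eq:integralvelocity} follows by integrating $-\EE_\alpha(t) \geqslant \alpha b_t\norm{\X}_L^2$ over $[0,\infty)$.

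The main obstacle will be the bookkeeping in computing $\dot{\G}$: one must repeatedly convert between $\inner{\cdot,\cdot}_L$ and $\inner{\cdot,\cdot}$ through $L$ and $L^{-1}$ so that the $c_t\inner{\nabla f(X),\X}$ cancellation actually appears. A minor subtlety is the sign of $\dot{A}_t + \alpha\dot{c}_t$; if it is negative, the convexity substitution goes the wrong way, but then $(\dot{A}_t + \alpha\dot{c}_t)[f(X) - f^*]$ is already $\leqslant 0$ and $\EE_\alpha \leqslant 0$ follows directly from the remaining non-positive terms without invoking convexity.
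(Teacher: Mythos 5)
Your proposal is correct and follows essentially the same route as the paper: the identical computation of $\dot{\G}(t)$ using the two ODE identities, the same bound on $\EE_\alpha$ leading to the same admissible range for $\alpha$ (you merge terms via $\inner{\nabla f(X),X-x^*}$ where the paper merges via $f(X)-f(x^*)$, a cosmetic difference), the same completion of squares for non-negativity and uniform boundedness, and the same integration step for \eqref{eq:integralvelocity}. Your closing remark on the sign of $\dot{A}_t+\alpha\dot{c}_t$ is a small point of care that the paper glosses over, but it does not change the argument.
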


\begin{proof}
	The derivative of $\G(t)$ can be calculated as follows:
	\begin{align*}
		\dot{\G}(t)= &\ \dot{c}_t[f(X)-f(x^*)]+c_t\inner{\nabla f(X),\X}+\inner{\X,X-x^*}_L\\
		&\ -\inner{\X,Z-x^*}_L-\inner{X-x^*,\Z}_L\\
		= &\ -b_t\|\X\|^2_L+a_t\inner{\nabla f(X),X-x^*}+\dot{c}_t[f(X)-f(x^*)].
	\end{align*}
	
	By previous calculations and the proof for Theorem \ref{thm:continuousrate}, the upper bound of $\dot{\E}_\alpha$ can be estimated as follows:\begin{align*}
		\EE_\alpha(t)= & \ (\dot{A}_t+\alpha\dot{c}_t)[f(X)-f(x^*)]-(1-\alpha)a_t\inner{\nabla f(X),X-x^*}\\
		&\ -\alpha b_t\|\X\|^2_L-a_tc_t\norm{\nabla f(X)}^2_{L^{-1}}\\
		\leqslant&\ [-(1-\alpha-d)a_t+\alpha\dot{c}_t][f(X)-f(x^*)]-\alpha b_t\|\X\|^2\\
		\leqslant&\ 0.
	\end{align*}
	Also, since $\alpha<1$, $\E_\alpha(t)$ can be represented as sum of non-negative terms as follows:
	\begin{align*}
		\E_\alpha(t)=&\ (A_t+\alpha c_t)[f(X)-f(x^*)]+\frac{\alpha-\alpha^2}{2}\norm{X-x^*}^2_L+\frac{1}{2}\norm{Z-x^*-\alpha(X-x^*)}^2.
	\end{align*}
	Then we have\[
	\norm{X-x^*}^2\leqslant\frac{2\E_\alpha(0)}{\alpha-\alpha^2}<\infty,
	\]
	which shows that $\norm{X(t)-x^*}^2_L$ is uniformly bounded.
	
	Next, for all $\alpha\in\left(0,\frac{1 - d}{1 + \sup_{t > 0} \frac{\dot{c}_t}{a_t}}\right)$, we integral $\dot{\E}_\alpha(t)$ from $0$ to $\infty$, and obtain\[
	\int_{0}^{\infty}\alpha b_t\|\X\|_L\dd t\leqslant\E_\alpha(0)<\infty.
	\]
	Thus, we obtain \eqref{eq:integralvelocity}.
\end{proof}
With the above lemma, we can obtain the following theorem.

\begin{theorem}
	Let $(X(t),Z(t))$ be the solution to \eqref{eq:continuousmain}. If \begin{itemize}
		\item $A_t=a_tb_t$, Assumption \ref{ass:continuous1} and Assumption \ref{ass:continuous2} hold,
		\item there exists a positive constant $\gamma$ and a non-negative constant $t_0$ such that $c_t\leqslant \gamma a_t$ for all $t\geqslant t_0$,
		\item and $\beta:=\lim\limits_{t\to\infty}\dfrac{t}{b_t}\in(0,\infty)$,
	\end{itemize}   then we have \[
	\lim_{t\to\infty}A_t[f(X(t))-f^*]=0.
	\]
\end{theorem}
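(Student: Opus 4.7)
The plan is to combine the integrability estimates already supplied by Lemma~\ref{lem:continuouso} and Theorem~\ref{thm:continuousrate} with the polynomial-type growth rate encoded by $\beta=\lim_{t\to\infty}t/b_t\in(0,\infty)$, in analogy with the classical $o(1/t^2)$ analysis of the Nesterov ODE. I envisage three stages: (i) extract the integrability $\int_0^\infty a_t[f(X(t))-f^*]\,\dd t<\infty$; (ii) translate this to the $A_t$ scale and deduce $\liminf_{t\to\infty} A_t[f(X(t))-f^*]=0$; (iii) upgrade the $\liminf$ to a genuine limit by proving convergence of $\|Z(t)-x^*\|_L$.

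For stages~(i) and (ii), Assumption~\ref{ass:continuous1} yields $a_t-\dot A_t\geqslant (1-d)a_t$, so \eqref{eq:integralvalue} combined with the convexity bound $f(X)-f^*\leqslant\langle\nabla f(X),X-x^*\rangle$ gives $(1-d)\int_0^\infty a_t[f(X(t))-f^*]\,\dd t\leqslant \E(0)$. The hypothesis $\beta\in(0,\infty)$ forces $b_t\sim t/\beta$, hence $a_t=A_t/b_t\sim \beta A_t/t$, so there exist $T_0$ and $c>0$ with $a_t\geqslant c A_t/t$ for $t\geqslant T_0$; thus $\int_{T_0}^\infty A_t[f(X(t))-f^*]/t\,\dd t<\infty$. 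The elementary dyadic bound $\int_T^{2T} A_t[f(X(t))-f^*]/t\,\dd t\geqslant (\log 2)\inf_{t\in[T,2T]}A_t[f(X(t))-f^*]$, together with the fact that the left-hand side tends to $0$ as $T\to\infty$, then immediately gives $\liminf_{t\to\infty}A_t[f(X(t))-f^*]=0$.

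For stage~(iii), Theorem~\ref{thm:continuousrate} already tells me that $\E(t)$ is non-increasing and non-negative, hence converges to some $\E_\infty\geqslant 0$. Since $\E(t)=A_t[f(X(t))-f^*]+\tfrac{1}{2}\|Z(t)-x^*\|_L^2$, the conclusion $A_t[f(X(t))-f^*]\to 0$ will follow once $\|Z(t)-x^*\|_L^2$ is shown to have a limit, because stage~(ii) then forces that limit to be $2\E_\infty$. I plan to obtain convergence of $\|Z(t)-x^*\|_L$ by an Opial-type argument: since $A_t\to\infty$ while $A_t[f(X(t))-f^*]$ is uniformly bounded we have $f(X(t))\to f^*$, so every weak cluster point of $X(t)$ belongs to $\Omega$ by lower-semicontinuity of $f$; and using $Z-X=b_t\X+c_tL^{-1}\nabla f(X)$ together with the estimate $\int_0^\infty b_t\|\X\|_L^2\,\dd t<\infty$ from Lemma~\ref{lem:continuouso} and the estimate $\int_0^\infty c_t^2\|\nabla f(X)\|_{L^{-1}}^2\,\dd t<\infty$ (which follows from \eqref{eq:integralnorm} together with $c_t\leqslant \gamma a_t$), the same inclusion transfers to $Z(t)$ along a subsequence $t_n$ on which $Z(t_n)-X(t_n)\to 0$. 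Opial's lemma then yields weak convergence of $Z(t)$ to some $\bar z\in\Omega$ and convergence of $\|Z(t)-x^*\|_L$ for every $x^*\in\Omega$, completing the proof.

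The main obstacle I anticipate is producing the subsequential decay $Z(t_n)-X(t_n)\to 0$: the $L^2$-type integrabilities of $\sqrt{b_t}\|\X\|_L$ and $c_t\|\nabla f(X)\|_{L^{-1}}$ do not by themselves deliver pointwise decay of either $b_t\|\X\|_L$ or $c_t\|\nabla f(X)\|_{L^{-1}}$, so the subsequence must be chosen carefully so that both factors decrease simultaneously. An alternative would be to build a refined Lyapunov function in the spirit of $\G(t)$ from Lemma~\ref{lem:continuouso} that forces the convergence of $\|Z(t)-x^*\|_L$ directly, bypassing Opial; either route will require some care.
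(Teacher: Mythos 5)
Your stages (i) and (ii) are sound: Assumption~\ref{ass:continuous1} together with \eqref{eq:integralvalue} and convexity does give $\int_0^\infty a_t[f(X(t))-f^*]\,\dd t<\infty$, and the hypothesis $\beta\in(0,\infty)$ does convert this into $\liminf_{t\to\infty}A_t[f(X(t))-f^*]=0$. You are also right that the whole theorem then reduces to showing that $\lim_{t\to\infty}\norm{Z(t)-x^*}_L^2$ exists for one $x^*\in\Omega$. The genuine gap is in stage (iii): Opial's lemma cannot deliver that convergence, because the convergence of $\norm{Z(t)-x^*}_L$ for every $x^*\in\Omega$ is one of the two \emph{hypotheses} of Opial's lemma, not one of its conclusions. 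Knowing only that every weak cluster point of $Z(t)$ lies in $\Omega$ is not enough (a trajectory oscillating between two minimizers satisfies that and has no convergent distances), and $Z$ is not Fej\'er monotone here: $\frac{\dd}{\dd t}\frac{1}{2}\norm{Z-x^*}_L^2=-a_t\inner{\nabla f(X),X-x^*}-A_t\frac{\dd}{\dd t}f(X(t))-a_tc_t\norm{\nabla f(X)}_{L^{-1}}^2$, and the middle term has no sign and no obvious integrable majorant, so you also cannot close the argument by Lemma~\ref{lem:existencelimit} applied directly to $\norm{Z-x^*}_L^2$. As written, stage (iii) is circular.

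The ``alternative'' you mention in your last sentence is in fact the route the paper takes, and it is not optional. The paper first uses the non-increasing, non-negative function $\E_\alpha=\E+\alpha\G$ from Lemma~\ref{lem:continuouso} to conclude that $\lim_{t\to\infty}\G(t)$ exists, hence (using $c_t\leqslant\gamma a_t$ and the $O(1/A_t)$ rate) that $\lim_{t\to\infty}\left[\frac{1}{2}\norm{X-x^*}_L^2-\inner{X-x^*,Z-x^*}_L\right]$ exists. It then writes $\frac{1}{2}\norm{X-x^*}_L^2+\frac{t}{\beta}\frac{\dd}{\dd t}\frac{1}{2}\norm{X-x^*}_L^2$ in terms of these converging quantities (substituting $\X=b_t^{-1}(Z-X-c_tL^{-1}\nabla f(X))$) and invokes Lemma~\ref{lem:tlimitexist} to get existence of $\lim\frac{1}{2}\norm{X-x^*}_L^2$; recombining with $\lim\E(t)$ yields existence of $\lim\bigl[A_t[f(X)-f^*]+\frac{1}{2}\norm{b_t\X+c_tL^{-1}\nabla f(X)}_L^2\bigr]$, which the integral estimates \eqref{eq:integralvalue}, \eqref{eq:integralnorm}, \eqref{eq:integralvelocity} then force to be $0$ --- exactly your liminf-to-limit upgrade, but anchored on $X$ rather than on $Z$. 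If you want to salvage your outline, replace the Opial step with this $\G$-based argument; the rest of your proposal then goes through.
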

\begin{proof}
	By Lemma \ref{lem:continuouso} and Theorem \ref{thm:continuousrate}, we have that $\lim\limits_{t\to\infty}\G(t)$ exists. Since $f(X(t))-f(x^*)\leqslant O(1/A_t)$ and $c_t\leqslant \gamma a_t$ for all $t\geqslant t_0$, then we have $\lim\limits_{t\to\infty} c_t[f(X(t))-f(x^*)]=0$. Thus\[
	\lim_{t\to\infty}\frac{1}{2}\norm{X(t)-x^*}_L^2-\inner{X(t)-x^*,Z(t)-x^*}_L\text{ exists.}
	\]
	Next, we will show that $\lim\limits_{t\to\infty}\dfrac{1}{2}\norm{X(t)-x^*}_L^2$ exists. By calculating the derivative of $\dfrac{1}{2}\norm{X-x^*}_L^2$, we have
	\begin{align*}
		\frac{\dd}{\dd t}\frac{1}{2}\norm{X-x^*}^2_L=&\ \inner{\X,X-x^*}_L\\
		=&\ \frac{1}{b_t}\inner{Z-X-c_tL^{-1}\nabla f(X),X-x^*}_L\\
		=&\ -\frac{1}{b_t}\norm{X-x^*}^2_L+\frac{1}{b_t}\inner{X-x^*,Z-x^*}_L-\frac{c_t}{b_t}\inner{\nabla f(X),X-x^*}
	\end{align*}
	Then we have\begin{align*}
		&\ \frac{1}{2}\norm{X-x^*}^2_L+\frac{t}{\beta}\frac{\dd}{\dd t}\frac{1}{2}\norm{X-x^*}^2_L \\
		= &\ \frac{1}{2}\norm{X-x^*}^2_L-\frac{t}{b_t\beta}\norm{X-x^*}^2_L+\frac{t}{b_t\beta}\inner{X-x^*,Z-x^*}_L-\frac{c_tt}{b_t\beta}\inner{\nabla f(X),X-x^*}.
	\end{align*}
	Since $c_t\leqslant \gamma a_t$ for all $t\geqslant t_0$, then by using \eqref{eq:integralvalue}, we have that\[
	\lim_{t\to\infty}\frac{c_tt}{b_t\beta}\inner{\nabla f(X),X-x^*}=0.
	\]
	Next, because $\lim\limits_{t\to\infty}\dfrac{1}{2}\norm{X-x^*}^2_L-\inner{X-x^*,Z-x^*}_L$ exists and $\norm{X-x^*}_L^2$ is uniformly bounded, we have that
	\begin{align*}
		&\ \lim_{t\to\infty}\left(\frac{1}{2}-\frac{t}{b_t\beta}\right)\norm{X-x^*}^2_L+\frac{t}{b_t\beta}\inner{X-x^*,Z-x^*}_L\\
		=& -\lim_{t\to\infty}\dfrac{1}{2}\norm{X-x^*}^2_L-\inner{X-x^*,Z-x^*}_L.
	\end{align*}
	Thus, $\lim\limits_{t\to\infty}\dfrac{1}{2}\norm{X-x^*}_L^2+\dfrac{t}{\beta}\dfrac{\dd}{\dd t}\dfrac{1}{2}\norm{X-x^*}_L^2$ exists. Then by Lemma \ref{lem:tlimitexist}, $\lim\limits_{t\to\infty}\dfrac{1}{2}\norm{X-x^*}_L^2$ exists. Also, we have\[
	\lim\limits_{t\to\infty}\inner{b_t\X+c_tL^{-1}\nabla f(X),X-x^*}_L \text{ exists.}
	\]
	
	Finally, we can prove $\lim\limits_{t\to\infty}A_t[f(X)-f^*]=0$. Owing to the existence of $\lim\limits_{t\to\infty}\E(t)$, $\lim\limits_{t\to\infty}\dfrac{1}{2}\norm{X-x^*}^2_L$, and $\lim\limits_{t\to\infty}\inner{b_t\X+c_tL^{-1}\nabla f(X),X-x^*}_L$, we have that\[
	\lim_{t\to\infty}A_t[f(X)-f(x^*)]+\frac{1}{2}\norm{b_t\X+c_tL^{-1}\nabla f(X)}^2_L\text{ exists.}
	\]
	By Cauchy-Schwarz inequality, we have\[
	\frac{1}{2}\norm{b_t\X+c_tL^{-1}\nabla f(X)}^2_L\leqslant b_t^2\|\X\|^2_L+c_t^2\norm{\nabla f(X)}^2_{L^{-1}}.
	\]
	Then by \eqref{eq:integralvalue}, \eqref{eq:integralnorm}, \eqref{eq:integralvelocity}, and the assumptions that $\lim\limits_{t\to\infty}\dfrac{t}{b_t}\in(0,\infty)$  and $c_t\leqslant \gamma a_t$, we have that \[
	\lim\limits_{t\to\infty}A_t[f(X)-f(x^*)]=\lim_{t\to\infty}A_t[f(X)-f(x^*)]+\frac{1}{2}\norm{b_t\X+c_tL^{-1}\nabla f(X)}^2_L=0.
	\]
\end{proof}

To demonstrate the $o(1/A_t)$ convergence rate of \eqref{eq:continuousmain}, we have introduced several additional assumptions. However, it can be shown that these assumptions are valid for some typical scenarios.

\begin{example}
	Firstly, let us revisit the parameters introduced in Example \ref{ex:continuouspolynomial}. Given that $A_t = a_t b_t$ and $\dot{A}_t = a_t$, we introduce a constant $d \in (0,1)$ and set $\tilde{a}_t = d^{-1} a_t = d^{-1} p t^{p-1}$, $\tilde{b}_t = d \tilde{b}_t = d p^{-1} t$. Consequently, $A_t = \tilde{a}_t\tilde{b}_t$ and $\dot{A}_t \leqslant d \tilde{a}_t$. By setting $\tilde{c}_t = \tilde{a}_t$, the condition that $\tilde{c}_t \leqslant \gamma \tilde{a}_t$ is met. Furthermore, $\lim\limits_{t\to\infty}tb_t^{-1}\in(0,\infty)$ holds true. Hence, the $o(1/t^p)$ convergence rate of \eqref{eq:continuousmain} can be established.
\end{example}

\begin{example}
	Secondly, we examine the parameters presented in Example \ref{ex:continuousexponential}. Given two constants $\lambda \in (1, \infty)$ and $d \in (0,1)$, we define $A_t = \exp(\lambda t)$, $a_t = d^{-1} \lambda \exp(\lambda t)$, $b_t = d \lambda^{-1}$, and $c_t = a_t$. Under these definitions, the assumptions given in Theorem \ref{thm:oecontinuousrate} satisfied, thereby implying the $o(\exp(-\lambda t))$ convergence rate for \eqref{eq:continuousmain}.
\end{example}

\section{Convergence Rate of SPPA}

In this section, we establish the convergence rate of Algorithm \ref{al:spppa}. For notational convenience, we introduce the term $\tilde{\nabla}f(x_{k+1}) := \dfrac{(b_k + 1)}{c_k} L (\tilde{x}_{k+1} - x_{k+1})$. By the first-order characterization of $x_{k+1}$, it follows that $\tilde{\nabla}f(x_{k+1}) \in \partial f(x_{k+1})$. Analogous to the discussion in Section \ref{sec:2}, our initial step is to demonstrate both the last-iterate and the convergence rate in a summation form of Algorithm \ref{al:spppa}. To achieve this, we define the following discrete-time Lyapunov function:

\begin{equation}
	\label{eq:lyapunovdiscrete}
	\mathcal{E}(k) := A_k [f(x_k) - f(x^*)] + \frac{1}{2} \|z_k - x^*\|_L^2.
\end{equation}
This Lyapunov function serves as a discrete counterpart to \eqref{eq:lyapunovcontinuous}. Through the analysis of \eqref{eq:lyapunovdiscrete}, we derive the subsequent theorem.

\begin{theorem}
	\label{thm:discreterate}
	Let $\{x_k\}$, $\{\tilde{x}_{k+1}\}$, and $\{z_k\}$ denote the sequences generated by Algorithm \ref{al:spppa}. Assuming the conditions
	\begin{equation}
		\label{eq:discretecondition}
		A_k = a_k b_k, \qquad 0 \leqslant A_{k+1} - A_k \leqslant a_k, \qquad c_k \geqslant \frac{a_k}{2},
	\end{equation}
	and $f \in \Gamma_0(\mathcal{H})$, the sequence $\{\mathcal{E}(k)\}$ is non-increasing. Consequently, the last-iterate convergence rate of Algorithm \ref{al:spppa} is given by
	\begin{equation}
		f(x_k) - f^* \leqslant \frac{A_0}{A_k} [f(x_0) - f^*] + \frac{\mathrm{dist}_L(x_0, \Omega)^2}{2 A_k}.
		\label{eq:discreterate}
	\end{equation}
	Additionally, for any $x^* \in \Omega$,
	\begin{gather}
		\sum_{k=0}^{\infty} (a_k + A_k - A_{k+1}) \langle \tilde{\nabla}f(x_{k+1}), x_{k+1} - x^* \rangle \leqslant A_0 [f(x_0) - f^*] + \frac{1}{2}\norm{x_0-x^*}^2_L,\ \forall x^*\in\Omega, \label{eq:valuesum} \\
		\sum_{k=0}^{\infty} \left(a_k c_k - \frac{a_k^2}{2}\right) \| \tilde{\nabla}f(x_{k+1}) \|^2 \leqslant A_0 [f(x_0) - f^*] + \frac{1}{2} \mathrm{dist}_L(x_0, \Omega)^2. \label{eq:normsum}
	\end{gather}
\end{theorem}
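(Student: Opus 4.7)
The plan is to mirror the Lyapunov calculation from the proof of Theorem \ref{thm:continuousrate}, translating each continuous-time step into its natural discrete analogue via the Symplectic Euler recursion. First I would compute the one-step difference $\E(k+1)-\E(k)$ and split it into the potential increment $A_{k+1}[f(x_{k+1})-f^*] - A_k[f(x_k)-f^*]$ and the mixed-term increment $\tfrac{1}{2}\norm{z_{k+1}-x^*}_L^2 - \tfrac{1}{2}\norm{z_k-x^*}_L^2$. For the potential part I would use the decomposition $A_k[f(x_{k+1})-f(x_k)] + (A_{k+1}-A_k)[f(x_{k+1})-f^*]$ and bound each summand with the subgradient inequality applied to $\tilde\nabla f(x_{k+1})\in\partial f(x_{k+1})$, which is exactly the first-order optimality condition for the proximal step defining $x_{k+1}$. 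For the mixed part I would apply the identity
\[
\tfrac{1}{2}\norm{z_{k+1}-x^*}_L^2 - \tfrac{1}{2}\norm{z_k-x^*}_L^2 = \inner{z_{k+1}-z_k,z_k-x^*}_L + \tfrac{1}{2}\norm{z_{k+1}-z_k}_L^2,
\]
substitute $z_{k+1}-z_k = -a_k L^{-1}\tilde\nabla f(x_{k+1})$ from the update, and replace $z_k-x^*$ using the defining relation $z_k-x^* = b_k(x_{k+1}-x_k) + c_k L^{-1}\tilde\nabla f(x_{k+1}) + x_{k+1}-x^*$.

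Collecting terms and invoking $A_k = a_kb_k$, the coefficients of $\inner{\tilde\nabla f(x_{k+1}),x_{k+1}-x_k}$ cancel exactly, leaving
\[
\E(k+1)-\E(k) \leqslant -(a_k + A_k - A_{k+1})\inner{\tilde\nabla f(x_{k+1}),\, x_{k+1}-x^*} - \bigl(a_kc_k - \tfrac{a_k^2}{2}\bigr)\norm{\tilde\nabla f(x_{k+1})}_{L^{-1}}^2.
\]
Since $x^*$ minimizes $f$, the monotonicity of $\partial f$ forces $\inner{\tilde\nabla f(x_{k+1}),x_{k+1}-x^*}\geqslant 0$, and the hypotheses $A_{k+1}-A_k\leqslant a_k$ and $c_k\geqslant a_k/2$ make both prefactors non-negative, so $\E(k+1)\leqslant \E(k)$. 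The last-iterate bound \eqref{eq:discreterate} then follows from $A_k[f(x_k)-f^*]\leqslant \E(k)\leqslant \E(0)$ by dividing by $A_k$ and taking the infimum over $x^*\in\Omega$. The summation estimates \eqref{eq:valuesum} and \eqref{eq:normsum} are obtained by telescoping the one-step inequality above: the partial sums of the left-hand sides are bounded by $\E(0)-\E(k+1)\leqslant \E(0)$, and for \eqref{eq:normsum} one takes the infimum over $x^*\in\Omega$ in the bound on $\E(0)$.

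The main obstacle I anticipate is the extra discretization remainder $\tfrac{1}{2}\norm{z_{k+1}-z_k}_L^2 = \tfrac{a_k^2}{2}\norm{\tilde\nabla f(x_{k+1})}_{L^{-1}}^2$, which has no counterpart in the continuous-time calculation. This term is precisely what forces the assumption $c_k\geqslant a_k/2$: it must be absorbed into the $-a_kc_k\norm{\tilde\nabla f(x_{k+1})}_{L^{-1}}^2$ produced by pairing $z_{k+1}-z_k$ with the $c_kL^{-1}\tilde\nabla f(x_{k+1})$ piece of $z_k-x^*$, leaving the non-positive residual $-(a_kc_k - a_k^2/2)\norm{\tilde\nabla f(x_{k+1})}_{L^{-1}}^2$. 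Tracking this cancellation, and verifying that using the implicit evaluation of $\tilde\nabla f$ at $x_{k+1}$ (rather than at $x_k$ or $\tilde x_{k+1}$) is what allows the cross-term $\inner{\tilde\nabla f(x_{k+1}),x_{k+1}-x_k}$ to disappear cleanly, is the main technical step beyond mechanically translating the continuous Lyapunov argument.
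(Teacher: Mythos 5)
Your proposal is correct and follows essentially the same route as the paper's proof: the identical Lyapunov difference decomposition, the subgradient inequality at $\tilde{\nabla}f(x_{k+1})$, the substitution of the two update relations to cancel the $\inner{\tilde{\nabla}f(x_{k+1}),x_{k+1}-x_k}$ cross term via $A_k=a_kb_k$, and the absorption of the $\tfrac{a_k^2}{2}\norm{\tilde{\nabla}f(x_{k+1})}_{L^{-1}}^2$ remainder using $c_k\geqslant a_k/2$, followed by telescoping. The only cosmetic difference is that you justify $\inner{\tilde{\nabla}f(x_{k+1}),x_{k+1}-x^*}\geqslant 0$ by monotonicity of $\partial f$ rather than by the convexity inequality $f(x_{k+1})-f(x^*)\leqslant\inner{\tilde{\nabla}f(x_{k+1}),x_{k+1}-x^*}$ used implicitly in the paper; both are valid.
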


\begin{proof}
	
	Step 1: Estimating the upper bound of $\E(k+1)-\E(k)$. First, we can divide $\E(k+1)-\E(k)$ into three parts as follows:\begin{align*}
		\E(k+1)-\E(k)= &\ A_k[f(x_{k+1})-f(x_k)]+(A_{k+1}-A_k)[f(x_{k+1})-f(x^*)]\\
		&\ +\frac{1}{2}\norm{z_{k+1}-x^*}^2_L-\frac{1}{2}\norm{z_k-x^*}^2_L.
	\end{align*}
	Next, we estimate the upper bound of first three terms. Since $\tilde{\nabla}f(x_{k+1})\in\partial f(x_{k+1})$, we have\begin{align*}
		A_k[f(x_{k+1})-f(x_k)] &\leqslant A_k\inner{\tilde{\nabla}f(x_{k+1}),x_{k+1}-x_k},  \\
		(A_{k+1}-A_k)[f(x_{k+1})-f(x^*)] & \leqslant (A_{k+1}-A_k)\inner{\tilde{\nabla}f(x_{k+1}),x_{k+1}-x^*}.
	\end{align*}
	Also, we have\begin{align*}
		\frac{1}{2}\norm{z_{k+1}-x^*}^2_L-\frac{1}{2}\norm{z_k-x^*}^2_L= &\ \inner{z_{k+1}-z_k,z_k-x^*}_L+\frac{1}{2}\norm{z_{k+1}-z_k}^2_L\\
		= &\ -a_k\inner{\tilde{\nabla}f(x_{k+1}),z_k-x^*}+\frac{a_k^2}{2}\norm{\tilde{\nabla}f(x_{k+1})}^2_{L^{-1}}. 
	\end{align*}
	By summing previous inequalities and using the assumption that $A_k=a_kb_k$, we have\begin{align*}
		\E(k+1)-\E(k)\leqslant &\ a_k\inner{\tilde{\nabla}f(x_{k+1}),(b_k+1)x_{k+1}-b_kx_k-z_k}+\frac{a_k^2}{2}\norm{\tilde{\nabla} f(x_{k+1})}^2_{L^{-1}}\\
		&\ +(A_{k+1}-A_k-a_k)\inner{\tilde{\nabla}f(x_{k+1}),x_{k+1}-x^*}\\
		= &\ \left(-a_kc_k+\frac{a_k^2}{2}\right)\norm{\tilde{\nabla}f(x_{k+1})}^2_{L^{-1}}\\
		&\ +(A_{k+1}-A_k-a_k)\inner{\tilde{\nabla}f(x_{k+1}),x_{k+1}-x^*}.
	\end{align*}
	
	Step 2: Deriving the convergence rates of Algorithm \ref{al:spppa}. Since $c_k\geqslant\dfrac{a_k}{2}$ and $A_{k+1}-A_k-a_k\geqslant 0$, we have $\E(k+1)-\E(k)\leqslant 0$. Then we can obtain the following inequality\[
	A_k[f(x_k)-f(x^*)]\leqslant\E(k)\leqslant\E(0).
	\]
	By dividing $A_k$ on both sides of the above inequality and taking infimum respect to all $x^*\in\Omega$, we obtain \eqref{eq:discreterate}.
	
	Next, by summing $\E(k+1)-\E(k)$ from 0 to $\infty$, we have\begin{gather}
		\sum_{k=0}^{\infty}(a_k+A_k-A_{k+1})\inner{\tilde{\nabla}f(x_{k+1}),x_{k+1}-x^*}  \leqslant\sum_{k=0}^{\infty}\E(k+1)-\E(k)\leqslant\E(0),\label{eq:proof1}\\
		\sum_{k=0}^{\infty}\left(a_kc_k-\frac{a_k^2}{2}\right)\norm{\tilde{\nabla}f(x_{k+1})}^2_{L^{-1}} \leqslant \sum_{k=0}^{\infty}\E(k)-\E(k+1)\leqslant\E(0).\label{eq:proof2}
	\end{gather}
	\eqref{eq:proof1} is \eqref{eq:valuesum}. Next, by taking the infimum over all $x^*\in\Omega$ on both sides of \eqref{eq:proof2}, we obtain \eqref{eq:normsum}.
\end{proof}
\begin{example}
	Here, we show that the conditions given in Theorem \ref{thm:discreterate} are sufficiently weak enough to allow A-PPA to be regarded as a special case of Algorithm \ref{al:spppa}. Let $\{\rho_k\}$ be an arbitrary positive sequence. We construct an instance of Algorithm \ref{al:spppa} such that $\dfrac{c_k}{b_k + 1} = \rho_k$. Inspired by the convergence rate of Algorithm \ref{al:appa}, we set
	\begin{align*}
		A_k &= \frac{1}{2} \left( \sum_{i=0}^{k-1} \sqrt{\rho_i} \right)^2, \\
		a_k &= \frac{\sqrt{\rho_k}}{2} \left( \sqrt{\rho_k} + 2 \sum_{i=0}^{k-1} \sqrt{\rho_i} \right), \\
		b_k &= \frac{\left( \sum_{i=0}^{k-1} \sqrt{\rho_i} \right)^2}{\sqrt{\rho_k} \left( \sqrt{\rho_k} + 2 \sum_{i=0}^{k-1} \sqrt{\rho_i} \right)}, \\
		c_k &= \frac{\sqrt{\rho_k} \left( \sum_{i=0}^{k} \sqrt{\rho_i} \right)^2}{\sqrt{\rho_k} + 2 \sum_{i=0}^{k-1} \sqrt{\rho_i}}.
	\end{align*}
	Firstly, because
	\begin{align*}
		b_k + 1 &= \frac{\left( \sum_{i=0}^{k-1} \sqrt{\rho_i} \right)^2 + 2 \sqrt{\rho_k} \sum_{i=0}^{k-1} \sqrt{\rho_i} + \rho_k}{\sqrt{\rho_k} \left( \sqrt{\rho_k} + 2 \sum_{i=0}^{k-1} \sqrt{\rho_i} \right)} \\
		&= \frac{\left( \sum_{i=0}^{k} \sqrt{\rho_i} \right)^2}{\sqrt{\rho_k} \left( \sqrt{\rho_k} + 2 \sum_{i=0}^{k-1} \sqrt{\rho_i} \right)},
	\end{align*}
	we have $\dfrac{c_k}{b_k + 1} = \rho_k$. Next, we show that $a_k \leqslant \dfrac{c_k}{2}$. Since $\sqrt{\rho_k} \geqslant 0$, we have
	\begin{align*}
		a_k &= \frac{\sqrt{\rho_k} \left( \sqrt{\rho_k} + 2 \sum_{i=0}^{k-1} \sqrt{\rho_i} \right)^2}{2 \left( \sqrt{\rho_k} + 2 \sum_{i=0}^{k-1} \sqrt{\rho_i} \right)} \\
		&\leqslant \frac{\sqrt{\rho_k} \left( 2 \sqrt{\rho_k} + 2 \sum_{i=0}^{k-1} \sqrt{\rho_i} \right)^2}{2 \left( \sqrt{\rho_k} + 2 \sum_{i=0}^{k-1} \sqrt{\rho_i} \right)} \\
		&= \frac{\sqrt{\rho_k} \left( \sum_{i=0}^{k} \sqrt{\rho_i} \right)^2}{2(\sqrt{\rho_k} + 2 \sum_{i=0}^{k-1} \sqrt{\rho_i})} \\
		&= c_k.
	\end{align*}
	Then by Theorem \ref{thm:discreterate}, the convergence rate of Algorithm \ref{al:spppa} is \[
	f(x_k)-f^*\leqslant\frac{\mathrm{dist}_L(x_0,\Omega)^2}{\left(\sum_{i=0}^{k-1}\sqrt{\rho_i}\right)^2},
	\]
	which is finer than the convergence rate of Algorithm \ref{al:appa} given in \eqref{eq:apparate}.
\end{example}
Next, we demonstrate the $o(1/A_k)$ convergence rate of Algorithm \ref{al:spppa}. Analogous to the approach taken in Section \ref{sec:2}, we establish this result under two distinct sets of assumptions.

\begin{theorem}
	\label{thm:oediscreterate}
	Let $\{x_k\}$, $\{\tilde{x}_{k+1}\}$, and $\{z_k\}$ be the sequences generated by Algorithm \ref{al:spppa}. Suppose the following conditions hold:
	\begin{itemize}
		\item $A_k = a_k b_k$,
		\item there exists a constant $d \in (0,1)$ such that $0 \leqslant A_{k+1} - A_k \leqslant d a_k$,
		\item there exists a positive constant $\gamma$ such that $a_k \geqslant \gamma A_k$.
	\end{itemize}
	Under these assumptions, it follows that
	\[
	\lim_{k \to \infty} A_k [f(x_k) - f^*] = 0.
	\]
\end{theorem}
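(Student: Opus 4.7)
The plan is to transplant the argument of Theorem~\ref{thm:oecontinuousrate} into the discrete setting, replacing the integral bound \eqref{eq:integralvalue} by its summation counterpart \eqref{eq:valuesum}. The hypotheses listed here strengthen (and implicitly include) those of Theorem~\ref{thm:discreterate}, so that \eqref{eq:valuesum} is directly applicable.

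First, I would feed the strengthened assumption $A_{k+1}-A_k \leqslant d\,a_k$ into \eqref{eq:valuesum}. The coefficient of the summand satisfies $a_k + A_k - A_{k+1} \geqslant (1-d)\,a_k$, and the convexity inequality $\langle \tilde{\nabla}f(x_{k+1}),\, x_{k+1}-x^*\rangle \geqslant f(x_{k+1})-f^*$ turns each summand into a non-negative quantity. Combining these yields
\begin{equation*}
    (1-d)\sum_{k=0}^{\infty} a_k\bigl[f(x_{k+1})-f^*\bigr] \;\leqslant\; \mathcal{E}(0) \;<\; \infty.
\end{equation*}
Because every term $a_k[f(x_{k+1})-f^*]$ is non-negative, convergence of this series immediately forces $a_k[f(x_{k+1})-f^*] \to 0$. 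This is the point at which the discrete analysis is cleaner than its continuous counterpart, where the analogous implication for integrals requires extra regularity.

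Next, to convert this into $A_k[f(x_k)-f^*] \to 0$, I would invoke both remaining hypotheses simultaneously. The lower bound $a_k \geqslant \gamma A_k$ gives $A_k \leqslant \gamma^{-1}\,a_k$, and combined with $A_{k+1}-A_k \leqslant d\,a_k$ it gives $A_{k+1} \leqslant (\gamma^{-1}+d)\,a_k$. Consequently,
\begin{equation*}
    A_{k+1}\bigl[f(x_{k+1})-f^*\bigr] \;\leqslant\; (\gamma^{-1}+d)\,a_k\bigl[f(x_{k+1})-f^*\bigr] \;\longrightarrow\; 0,
\end{equation*}
and after re-indexing this is the desired conclusion $\lim_{k\to\infty}A_k[f(x_k)-f^*]=0$.

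The subtlety worth isolating is this last step. The summability argument naturally delivers $a_k[f(x_{k+1})-f^*]\to 0$, whereas the claim concerns the pair $(A_{k+1}, x_{k+1})$, i.e., the weight one index ahead. Bounding $A_{k+1}$ by a multiple of $a_k$---rather than of $A_k$, for which we only possess a \emph{lower} bound---requires \emph{both} the monotonicity-style bound on $A_{k+1}-A_k$ and the lower bound on $a_k/A_k$, which explains why the hypothesis $a_k \geqslant \gamma A_k$ is indispensable and not merely a convenience.
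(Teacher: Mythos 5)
Your proposal is correct and follows essentially the same route as the paper's own proof: apply \eqref{eq:valuesum} with the convexity inequality and $A_{k+1}-A_k\leqslant d\,a_k$ to get summability of $(1-d)a_k[f(x_{k+1})-f^*]$, then use $a_k\geqslant\gamma A_k$ to pass to $A_k[f(x_k)-f^*]\to 0$. Your explicit handling of the index shift via $A_{k+1}\leqslant(\gamma^{-1}+d)a_k$ is in fact more careful than the paper, which silently identifies $a_k[f(x_{k+1})-f^*]\to 0$ with $a_k[f(x_k)-f^*]\to 0$.
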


\begin{proof}
	By \eqref{eq:valuesum}, the inequality $f(x_{k+1})-f(x^*)\leqslant\inner{\tilde{\nabla}f(x_{k+1}),x_{k+1}-x^*}$, and the assumption that there exists a constant $d\in(0,1)$ such that $0\leqslant A_{k+1}-A_k\leqslant da_k$, we have that $\lim\limits_{k\to\infty} a_k[f(x_k)-f^*]=0$. Because there exists a positive constant $\gamma$ such that $a_k\geqslant \gamma A_k$, we have\[
	\lim_{k\to\infty}A_k[f(x_k)-f^*]=0.
	\]
\end{proof}

\begin{theorem}
	\label{thm:okdiscreterate}
	Let $\{x_k\}$, $\{\tilde{x}_{k+1}\}$, and $\{z_k\}$ be the sequences generated by Algorithm \ref{al:spppa}. Suppose the following conditions are satisfied:
	\begin{itemize}
		\item $A_k = a_k b_k$,
		\item there exists a constant $d \in (0,1)$ such that $0 \leqslant A_{k+1} - A_k \leqslant d a_k$,
		\item $\sup_{k \geqslant 1} \dfrac{c_{k+1} - c_k}{a_k} < \infty$
		\item $\beta := \lim\limits_{k \to \infty} \dfrac{k}{b_k} \in (1, \infty)$,
		\item there exist two positive constants $\gamma_1 > 0.5$ and $\gamma_2$, and an integer $K$, such that $\gamma_1 a_k \leqslant c_k \leqslant \gamma_2 a_k$ for all $k \geqslant K$.
	\end{itemize}
	Under these conditions, it follows that
	\[
	\lim_{k \to \infty} A_k [f(x_k) - f^*] = 0.
	\]
\end{theorem}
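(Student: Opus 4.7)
The overall plan mirrors the ODE-level proof of the $o(1/A_t)$ theorem in Section~\ref{sec:2}, translated through the bridging identity
$$z_k - x_{k+1} = b_k(x_{k+1}-x_k) + c_k L^{-1}\tilde{\nabla}f(x_{k+1}),$$
which is the Symplectic-Euler analogue of $Z - X = b_t\dot{X} + c_t L^{-1}\nabla f(X)$. I would introduce the discrete auxiliary function
$$\G(k) := c_k[f(x_k)-f(x^*)] + \tfrac{1}{2}\|x_k-x^*\|_L^2 - \langle x_k-x^*, z_k-x^*\rangle_L,$$
expand $\G(k+1)-\G(k)$ using the $z$-update and convexity of $f$, and combine with the estimate for $\E(k+1)-\E(k)$ already obtained in the proof of Theorem~\ref{thm:discreterate}. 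For $\alpha>0$ sufficiently small (in terms of $1-d$, $\sup_k(c_{k+1}-c_k)/a_k$, and $\gamma_1-\tfrac{1}{2}$), I expect to show $\E_\alpha(k) := \E(k) + \alpha\G(k)$ is non-increasing and, by regrouping as a sum of squares in the spirit of Lemma~\ref{lem:continuouso}, non-negative. This gives uniform boundedness of $\|x_k-x^*\|_L$ and, telescoping, the summability $\sum_k b_k\|x_{k+1}-x_k\|_L^2<\infty$, the discrete counterpart of \eqref{eq:integralvelocity}.

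Since $\lim_k\E(k)$ and $\lim_k\E_\alpha(k)$ both exist, so does $\lim_k\G(k)$; combined with $c_k\leqslant\gamma_2 a_k$ and the rate $f(x_k)-f^*=O(1/A_k)$ from Theorem~\ref{thm:discreterate}, the potential term $c_k[f(x_k)-f^*]$ vanishes, so $\lim_k\bigl[\tfrac{1}{2}\|x_k-x^*\|_L^2 - \langle x_k-x^*,z_k-x^*\rangle_L\bigr]$ exists. Setting $g_k:=\tfrac{1}{2}\|x_k-x^*\|_L^2$ and using the bridging identity together with $k/b_k\to\beta$, I would expand $g_{k+1}-g_k$ to isolate the quantity $g_k+(k/\beta)(g_{k+1}-g_k)$ up to terms that are either summable (via \eqref{eq:valuesum}, \eqref{eq:normsum}, and the preceding bound) or vanishing in $k$. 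A discrete analogue of Lemma~\ref{lem:tlimitexist} (whose hypothesis $\beta>1$ is used here precisely to make the induced backward recursion $g_k \approx \tfrac{k/\beta}{k/\beta+1}g_{k+1}+\cdots$ have bounded Cesàro-type inverse) then yields $\lim_k g_k$, whence $\lim_k\langle b_k(x_{k+1}-x_k)+c_k L^{-1}\tilde{\nabla}f(x_{k+1}), x_k-x^*\rangle_L$ also exists. Consequently,
$$\lim_k\Bigl(A_k[f(x_k)-f^*] + \tfrac{1}{2}\|b_k(x_{k+1}-x_k)+c_k L^{-1}\tilde{\nabla}f(x_{k+1})\|_L^2\Bigr) \text{ exists.}$$
A Cauchy--Schwarz bound on the square term, combined with $b_k\sim k/\beta$, $c_k\leqslant\gamma_2 a_k$, and the summability of $b_k\|x_{k+1}-x_k\|_L^2$ and $(a_k c_k-a_k^2/2)\|\tilde{\nabla}f(x_{k+1})\|_{L^{-1}}^2$, forces this common limit to be zero, giving $\lim_k A_k[f(x_k)-f^*]=0$.

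The main obstacle I anticipate lies in the discrete bookkeeping: unlike the ODE case, each difference $\G(k+1)-\G(k)$ and $g_{k+1}-g_k$ produces quadratic remainders such as $\tfrac{1}{2}\|x_{k+1}-x_k\|_L^2$ and $\tfrac{a_k^2}{2}\|\tilde{\nabla}f(x_{k+1})\|_{L^{-1}}^2$ that must be absorbed carefully into the summable terms, and the assumption $\gamma_1>\tfrac{1}{2}$ must be invoked at precisely the right moment to keep the coefficient of $\|\tilde{\nabla}f\|^2$ non-negative when defining $\E_\alpha$. Equally delicate is the formulation of the discrete version of Lemma~\ref{lem:tlimitexist}: the strict requirement $\beta>1$ (rather than $\beta>0$ as in the ODE) is needed so that the inversion extracting $\lim g_k$ from $\lim[g_k+(k/\beta)(g_{k+1}-g_k)]$ converges, and getting this constant threshold right is the most subtle quantitative point of the argument.
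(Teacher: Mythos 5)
Your proposal follows essentially the same route as the paper's proof: the same auxiliary sequence $\G(k)$, the same perturbed Lyapunov function $\E_\alpha(k)=\E(k)+\alpha\G(k)$ written as a sum of non-negative terms, the same extraction of $\lim_k\tfrac{1}{2}\|x_k-x^*\|_L^2$ via the discrete limit lemma (Lemma~\ref{lem:klimitexist}, which is exactly why $\beta>1$ is assumed), and the same concluding Cauchy--Schwarz argument combining \eqref{eq:valuesum}, \eqref{eq:normsum}, and the summability of $b_k\|x_{k+1}-x_k\|_L^2$. You also correctly identify the two quantitatively delicate points ($\gamma_1>\tfrac{1}{2}$ making the coefficient in \eqref{eq:normsum} positive, and the threshold $\beta>1$), so the plan is sound and matches the paper.
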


\begin{proof}
	Step 1: Introduce an auxiliary sequence. Let $\{\G(k)\}$ be the auxiliary sequence defined as \[
	\G(k):=c_k[f(x_k)-f(x^*)]+\frac{1}{2}\norm{x_k-x^*}^2_L-\inner{x_k-x^*,z_k-x^*}_L.
	\]
	The difference of $\{\G(k)\}$ can be calculated as follows:\begin{align*}
		\G(k+1)-\G(k)=&\ (c_{k+1}-c_k)[f(x_{k+1})-f(x^*)]+c_k[f(x_{k+1})-f(x_k)]\\
		&\ +\inner{x_{k+1}-x_k,x_{k+1}-x^*}_L-\frac{1}{2}\norm{x_{k+1}-x_k}^2_L\\
		&\ -\inner{x_{k+1}-x^*,z_{k+1}-z_k}_L-\inner{x_{k+1}-x_k,z_k-x^*}_L\\
		\leqslant&\ -\left(b_k+\frac{1}{2}\right)\norm{x_{k+1}-x_k}^2_L+a_k\inner{\tilde{\nabla} f(x_{k+1}),x_{k+1}-x^*}\\
		&\ +(c_{k+1}-c_k)[f(x_{k+1})-f(x^*)].
	\end{align*}

	Step 2: Consider $\E_\alpha(k)=\E(k)+\alpha\G(k)$. Let $\alpha$ be an arbitrary constant that belongs to $\left(0,\frac{1-d}{1+\sup_{k\geqslant 1}\frac{c_{k+1}-c_k}{a_k}}\right)$. By previous calculations, we have\begin{align*}
		\E_\alpha(k+1)-\E_\alpha(k)	\leqslant &\ [A_{k+1}-A_k-(1-\alpha)a_k]\inner{\tilde{\nabla}f(x_{k+1}),x_{k+1}-x^*}\\
		&\ +\alpha(c_{k+1}-c_k)[f(x_{k+1})-f(x^*)]-\alpha\left(b_k+\frac{1}{2}\right)\norm{x_{k+1}-x_k}^2_L\\
		\leqslant&\ [A_{k+1}-A_k-(1-\alpha)a_k+\alpha(c_{k+1}-c_k)][f(x_{k+1})-f(x^*)]\\
		&\ -\alpha\left(b_k+\frac{1}{2}\right)\norm{x_{k+1}-x_k}^2_L\\
		\leqslant&\ -\alpha\left(b_k+\frac{1}{2}\right)\norm{x_{k+1}-x_k}^2_L,\qquad\forall k\geqslant 1.
	\end{align*}
	Thus, $\{\E_\alpha(k)\}_{k\geqslant 1}$ is non-increasing. In addition, since $\alpha<1$, we have\begin{equation}
		\label{eq:oxkx}
		\E_\alpha(k)= (A_t+\alpha c_t)[f(x_k)-f(x^*)]+\frac{\alpha-\alpha^2}{2}\norm{x_k-x^*}^2_{L}+\frac{1}{2}\norm{z_k-x^*-\alpha (x_k-x^*)}_L^2.
	\end{equation}
	Thus $\{\E_\alpha(k)\}$ is non-negative. The non-increasing property and non-negative of $\{\E_\alpha(k)\}$ imply that $\lim\limits_{k\to\infty}\E_\alpha(k)$ exists.

	Step 3: Prove the existence of $\lim\limits_{k\to\infty}\dfrac{1}{2}\norm{x_k-x^*}_L^2$. First, because the limits of both of $\E_\alpha(k)$ and $\E(k)$ exists, $\lim\limits_{k\to\infty}\G(k)$ exists. Also, because $f(x_k)-f^*\leqslant O(1/A_k)$ and $c_k\leqslant \gamma_2a_k$ for all $k\geqslant K$, we have\begin{equation}
		\lim_{k\to\infty}\frac{1}{2}\norm{x_k-x^*}^2_L-\inner{x_k-x^*,z_k-x^*}_L\text{ exists.}
		\label{eq:o1}
	\end{equation}
	Also, by \eqref{eq:oxkx}, we have that $\norm{x_k-x^*}^2_L$ is uniformly bounded. 
	
	Next, we consider the difference of $\dfrac{1}{2}\norm{x_k-x^*}^2$. Direct calculations show that\begin{align*}
		&\ \frac{1}{2}\norm{x_{k+1}-x^*}_L^2-\frac{1}{2}\norm{x_k-x^*}_L^2=\inner{x_{k+1}-x_k,x_{k+1}-x^*}_L-\frac{1}{2}\norm{x_{k+1}-x_k}_L^2\\
		= &\ \frac{1}{b_k}\inner{z_k-x_{k+1},x_{k+1}-x^*}_L-\frac{c_k}{b_k}\inner{\tilde{\nabla}f(x_{k+1}),x_{k+1}-x^*}-\frac{1}{2}\norm{x_{k+1}-x_k}_L^2\\
		=&\ -\left(\frac{1}{b_k}+\frac{1}{2}\right)\norm{x_{k+1}-x^*}_L^2+\frac{1}{b_k}\inner{x_{k+1}-x^*,z_{k+1}-x^*}_L\\
		&\ -\frac{c_k-a_k}{b_k}\inner{\tilde{\nabla} f(x_{k+1}),x_{k+1}-x^*}.
	\end{align*}
	Then we have\begin{align*}
		& \ \frac{1}{2}\norm{x_k-x^*}_L^2+\frac{k}{\beta}\left(\frac{1}{2}\norm{x_{k+1}-x^*}_L^2-\frac{1}{2}\norm{x_k-x^*}_L^2\right) \\
		= &\ \left(\frac{1}{2}-\frac{k}{b_k\beta}\right)\norm{x_{k+1}-x^*}_L^2+\frac{k}{b_k\beta}\inner{x_{k+1}-x^*,z_{k+1}-x^*}_L\\
		&\ -\frac{(c_k-a_k)(k-\beta)}{b_k\beta}\inner{\tilde{\nabla}f(x_{k+1}),x_{k+1}-x^*}-\frac{k-\beta}{2\beta}\norm{x_{k+1}-x_k}_L^2\\
		&\ +\frac{1}{b_k}\norm{x_k-x^*}_L^2-\frac{1}{b_k}\inner{x_{k+1}-x^*,z_{k+1}-x^*}_L.
	\end{align*}
	First, because $\norm{x_k-x^*}_L^2$ is uniformly bounded, $\lim\limits_{k\to\infty}\dfrac{1}{2}\norm{x_k-x^*}_L^2-\inner{x_k-x^*,z_k-x^*}_L$ exists, and $\lim\limits_{k\to\infty}kb_k^{-1}\in(1,\infty)$, we have that\[
	\lim_{k\to\infty}\frac{1}{b_k}\norm{x_k-x^*}_L^2-\frac{1}{b_k}\inner{x_{k+1}-x^*,z_{k+1}-x^*}_L=0.
	\] 
	Next, by \eqref{eq:valuesum} and $\lim\limits_{k\to\infty}\dfrac{k-\beta}{b_k}$ is finite, \[
	\lim_{k\to\infty}-\frac{(c_k-a_k)(k-\beta)}{b_k\beta}\inner{\tilde{\nabla}f(x_{k+1}),x_{k+1}-x^*}=0.
	\]
	Additionally, because $\E_\alpha(k+1)-\E_\alpha(k)\leqslant-\left(b_k+\dfrac{1}{2}\right)\norm{x_{k+1}-x_k}_L^2$ and $\lim\limits_{k\to\infty}kb_k^{-1}\in(1,\infty)$, we have\[
	\lim_{k\to\infty}-\frac{k-\beta}{2\beta}\norm{x_{k+1}-x_k}_L^2=0.
	\]
	Finally, owing to the existence of $\lim\limits_{k\to\infty}\dfrac{1}{2}\norm{x_k-x^*}_L^2-\inner{x_k-x^*,z_k-x^*}_L$, we have that 
	\begin{align*}
		&\ \lim_{k\to\infty}\left(\frac{1}{2}-\frac{k}{b_k\beta}\right)\norm{x_{k+1}-x^*}_L^2+\frac{k}{b_k\beta}\inner{x_{k+1}-x^*,z_{k+1}-x^*}_L \\
		= &\ \lim_{k\to\infty}\frac{1}{2}\norm{x_k-x^*}^2_L-\inner{x_k-x^*,z_k-x^*}_L.
	\end{align*}
	Then by Lemma \ref{lem:klimitexist}, we can show that $\lim\limits_{k\to\infty}\dfrac{1}{2}\norm{x_k-x^*}_L^2$ exists.

	Step 4: Demonstrate the $o(1/A_k)$ convergence rate. Owing to the existence of $\lim\limits_{k\to\infty}\dfrac{1}{2}\norm{x_k-x^*}_L^2$, we obtain that $\lim\limits_{k\to\infty}\inner{x_k-x^*,z_k-x^*}_L$ exists. Since $z_k=b_k(x_{k+1}-x_k)+c_kL^{-1}\tilde{\nabla}f(x_{k+1})+x_{k+1}$, we have that\begin{equation}
		\lim_{k\to\infty}\inner{(b_k+1)(x_{k+1}-x_k)+c_kL^{-1}\tilde{\nabla}f(x_{k+1}),x_k-x^*}_L\text{ exists.}
		\label{eq:o2}
	\end{equation}
	Also, because of \eqref{eq:o1}, \eqref{eq:o2}, and the existence of $\lim\limits_{k\to\infty}\E(k)$, we have \begin{equation}
		\lim_{k\to\infty}A_k[f(x_k)-f^*]+\frac{1}{2}\norm{(b_k+1)(x_{k+1}-x_k)+c_kL^{-1}\tilde{\nabla}f(x_{k+1})}_L^2\text{ exists.}
	\end{equation}
	
	Next, because $\{\E_\alpha(k)\}_{k\geqslant 1}$ is non-increasing and non-negative and $\E_\alpha(k+1)-\E_\alpha(k)\leqslant-\alpha\left(b_k+\dfrac{1}{2}\right)\norm{x_{k+1}-x_k}_L^2$ for all $k\geqslant 1$, we have\begin{equation}
		\label{eq:o3}
		\sum_{k=1}^\infty\left(b_k+\frac{1}{2}\right)\norm{x_{k+1}-x_k}_L^2<\infty.
	\end{equation}
	By Cauchy-Schwarz inequality, we have\begin{equation}
		\label{eq:o4}
		\frac{1}{2}\norm{(b_k+1)(x_{k+1}-x_k)+L^{-1}c_k\tilde{\nabla}f(x_{k+1})}_L^2\leqslant(b_k+1)^2\norm{x_{k+1}-x_k}_L^2+c_k^2\norm{\tilde{\nabla} f(x_{k+1})}_{L^{-1}}^2.
	\end{equation}
	Based on \eqref{eq:o1}-\eqref{eq:o4}, \eqref{eq:valuesum}, \eqref{eq:normsum}, and $\gamma_1a_k\leqslant c_k\leqslant \gamma_2a_k$, we have that\begin{align*}
		\lim_{k\to\infty}A_k[f(x_k)-f^*]= &\ \lim_{k\to\infty}A_k[f(x_k)-f^*]+\frac{1}{2}\norm{(b_k+1)(x_{k+1}-x_k)+L^{-1}c_k\tilde{\nabla}f(x_{k+1})}_L^2\\
		=&\ 0.
	\end{align*}
\end{proof}

At the end of this section, we present several instances of Algorithm \ref{al:spppa} that satisfy the conditions given in Theorem \ref{thm:oediscreterate} or Theorem \ref{thm:okdiscreterate}.
\begin{example}
	\label{ex:constant}
	Firstly, we propose an instance of Algorithm \ref{al:spppa} such that $\dfrac{c_k}{b_k + 1}$ is constant. Given $r\geqslant 2$, we set\begin{align*}
		A_k &= \frac{ck(k+r)}{2r^2}, \\
		a_k &= \frac{c(k+r)}{r},\\
		b_k &= \frac{k}{r},\\
		c_k &= \frac{c(k+r)}{r}.
	\end{align*}
	Direct calculation shows that\[
	A_{k+1}-A_k=\frac{c(2k+r+1)}{r^2}\leqslant\frac{2}{r}a_k.
	\]
	Then by Theorem \ref{thm:okdiscreterate}, if $r>2$, the convergence rate of Algorithm \ref{al:spppa} is essentially $o(1/k^2)$. If $r=2$, the convergence rate of Algorithm \ref{al:spppa} is $O(1/k^2)$.
\end{example}

\begin{example}
	Next, we present an instance of Algorithm \ref{al:spppa} with a $p$-th order convergence rate, where $p \geqslant 1$ is an integer. Here, $k^{(p)}$ denotes $\prod_{i=0}^{p-1} (k + i)$. Given $d \in (0, 1]$, we set
	\begin{align*}
		A_k &= k^{(p)}, \\
		a_k &= p d^{-1} (k + 1)^{(p-1)}, \\
		b_k &= d p^{-1} k, \\
		c_k &= a_k.
	\end{align*}
	By Theorem \ref{thm:discreterate}, the convergence rate of Algorithm \ref{al:spppa} is $O(1/k^{(p)})$. Moreover, if $d < 1$, all assumptions stated in Theorem \ref{thm:okdiscreterate} are satisfied, implying the convergence rate is essentially $o(1/k^{(p)})$.
\end{example}

\begin{example}
	Subsequently, we introduce an instance of Algorithm \ref{al:spppa} with an exponential convergence rate. Given two constants $\rho > 1$ and $d \in (0, 1]$, we set
	\begin{align*}
		A_k &= \rho^k, \\
		a_k &= d^{-1} (\rho - 1) \rho^k, \\
		b_k &= d (\rho - 1)^{-1}, \\
		c_k &= a_k.
	\end{align*}
	By Theorem \ref{thm:discreterate}, the convergence rate of Algorithm \ref{al:spppa} is $O(\rho^{-k})$. Furthermore, if $d < 1$, all assumptions given in Theorem \ref{thm:oediscreterate} are satisfied, implying a convergence rate of $o(\rho^{-k})$.
\end{example}

\section{Application: Symplectic Augmented Lagrangian Method}
\label{sec:b}
In this section, we derive the symplectic augmented Lagrangian method via Algorithm \ref{al:spppa}. First, we give a brief introduction to Lagrangian duality theory. To acquire further theory about Lagrangian duality theory, we refer to \cite{rockafellar1970,bonnas2000}. For the optimization problem \eqref{eq:main}, the corresponding perturbation problem is given as follows:\begin{equation}
	\min_x\varphi(x,u),\tag{$P_u$}
	\label{eq:perturbation}
\end{equation}
where $H'$ is an additional real Hilbert space, $\varphi: H\times H'\to\mathbb{R}$ is the perturbation function of $f$ such that $\varphi(x,0)=f(x)$. The Lagrangian function is given as follow:
\begin{equation}
	L(x,\lambda)=\inf_u\varphi(x,u)-\inner{\lambda,u}.
	\label{eq:lagrangian}
\end{equation}
The Lagrangian duality problem respect to the primal problem \eqref{eq:main} is defined as\begin{equation}
	\max_\lambda\min_x L(x,\lambda).
	\tag{$LD$}
	\label{eq:LD}
\end{equation}
Because \eqref{eq:LD} can be viewed as the optimization problem respect to the objective function $-\inf_x L(x,\cdot)$, Algorithm \ref{al:spppa} can be used to solve \eqref{eq:LD}. By applying Algorithm \ref{al:spppa} to \eqref{eq:LD}, we obtain\begin{align}
	\tilde{\lambda}_{k+1} &= \frac{b_k}{b_k+1}x_k+\frac{1}{b_k+1}z_k, \\
	\lambda_{k+1} &= \argmin_\lambda\left\{-\inf_x L(x,\lambda)+\frac{b_k+1}{2c_k}\norm{\lambda-\tilde{\lambda}_{k+1}}^2_L\right\},\label{eq:minmax1}\\
	z_{k+1}&=z_k+\frac{a_k(b_k+1)}{c_k}(x_{k+1}-\tilde{x}_{k+1}).
\end{align}
Next, we simplify \eqref{eq:minmax1}. First, \eqref{eq:minmax1} can be transformed into the following max-min problem:\begin{equation}
	\max_\lambda\min_{x,u}\varphi(x,u)-\inner{\lambda,u}-\frac{b_k+1}{2c_k}\norm{\lambda-\tilde{\lambda}_{k+1}}^2.
	\label{eq:minmax2}
\end{equation}
Owing to Theorem 34.3 in \cite{rockafellar1970}, if $\varphi$ is a proper convex function, \eqref{eq:minmax2} is equivalent to\begin{equation}
	\min_{x,u}\max_\lambda\varphi(x,u)-\inner{\lambda,u}-\frac{b_k+1}{2c_k}\norm{\lambda-\tilde{\lambda}_{k+1}}^2.
	\label{eq:minmax3}
\end{equation}
It is obvious that the solution of \eqref{eq:minmax3} can be given as follows:\begin{align}
	(x_{k+1},u_{k+1}) &= \argmin_{x,u}\varphi(x,u)-\inner{\tilde{\lambda}_{k+1},u}+\frac{c_k}{2(b_k+1)}\norm{u}^2_{L^{-1}}, \\
	\lambda_{k+1} &= \tilde{\lambda}_{k+1}-\frac{c_k}{b_k+1}L^{-1}u_{k+1}.
\end{align}
In conclusion, by replacing $L^{-1}$ by $L$, we obtain the following algorithm.
\begin{algorithm}
	\caption{Symplectic Augmented Lagrangian Method}
	\label{al:salm}
	\textbf{Initialization:} $\lambda_0$, $z_0=\lambda_0$\;
	\For{$k=0, 1, \cdots$}{
		Choose $a_k$, $b_k$, $c_k$\;
		$\tilde{\lambda}_{k+1}=\dfrac{b_k}{b_k+1}\lambda_k+\dfrac{1}{b_k+1}z_k$\;
		$(x_{k+1},u_{k+1}) = \argmin_{x,u}\varphi(x,u)-\inner{\tilde{\lambda}_{k+1},u}+\dfrac{c_k}{2(b_k+1)}\norm{u}^2_L$\;
		$\lambda_{k+1}=\tilde{\lambda}_{k+1}-\dfrac{c_k}{b_k+1}Lu_{k+1}$\;
		$z_{k+1}=z_k-a_kLu_{k+1}$\;
	}
\end{algorithm}

The reason why Algorithm \ref{al:salm} is called Symplectic Augmented Lagrangian Method is that \[
L_\rho(x,\lambda)=\inf_u \varphi(x,u)-\inner{\lambda,u}+\dfrac{\rho}{2}\norm{u}^2
\]
is the augmented Lagrangian function. 
\begin{example}[Convex programming with linear equality constraints]
	Here, we consider the following problem:\begin{equation}
		\begin{gathered}
			\min_x f(x),\\
			s. t.\quad Ax=b,
		\end{gathered}
		\label{eq:linearconstraints}
	\end{equation}
	where $A\in\mathbb{R}^{m\times n}$ and $f\in\Gamma_0(\mathbb{R}^n)$. Here we assume The perturbation function of \eqref{eq:linearconstraints} is given as follows:\begin{equation}
		\varphi(x,u)=f(x)+I(Ax-b=u).
	\end{equation}
	It is easy to verify that $\varphi$ is closed proper convex if there exists $x$ such that $x$ belongs to relative interior of effective domain of $f$ and $Ax=b$. Also, the Lagrangian function can be transformed as follows:\begin{align*}
		L(x,\lambda) &= \inf_u f(x)+I(Ax-b=u)-\inner{\lambda,u} \\
		&= f(x)-\inner{\lambda,Ax-b}.
	\end{align*}
	Also, the iteration formula of Algorithm \ref{al:salm} applied to \eqref{eq:linearconstraints} can be alternated by:\begin{align*}
		\tilde{\lambda}_{k+1}&=\dfrac{b_k}{b_k+1}\lambda_k+\dfrac{1}{b_k+1}z_k,\\
		x_{k+1} &= \argmin_{x}f(x)-\inner{\tilde{\lambda}_{k+1},Ax-b}+\dfrac{c_k}{2(b_k+1)}\norm{Ax-b}^2_L,\\
		\lambda_{k+1}&=\tilde{\lambda}_{k+1}-\dfrac{c_k}{b_k+1}L(Ax_{k+1}-b),\\
		z_{k+1}&=z_k-a_kL(Ax_{k+1}-b).
	\end{align*}
\end{example}

\begin{lemma}
	Let $\{x_{k+1}\}$, $\{\lambda_k\}$, $\{\tilde{\lambda}_{k+1}\}$, and $\{z_k\}$ be the sequences generated by Algorithm \ref{al:salm}. If $\varphi(\cdot,\cdot)$ is closed proper convex function, then we have\[
	\inf_x L(x,\lambda_{k+1})=L(x_{k+1},\lambda_{k+1})=\varphi(x_{k+1},u_{k+1})-\inner{\lambda_{k+1},u_{k+1}}.
	\]
\end{lemma}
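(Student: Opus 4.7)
The plan is to unpack the definition of $L(x,\lambda)$, apply the first-order optimality condition for the joint minimization that produces $(x_{k+1},u_{k+1})$, and then reassemble the pieces using convex duality. The three quantities in the claim should collapse to the same value once we show that $(x_{k+1},u_{k+1})$ is in fact a joint minimizer of $(x,u)\mapsto \varphi(x,u)-\inner{\lambda_{k+1},u}$.

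First I would record the definition of $(x_{k+1},u_{k+1})$ from Algorithm~\ref{al:salm}: it minimizes $\varphi(x,u)-\inner{\tilde{\lambda}_{k+1},u}+\frac{c_k}{2(b_k+1)}\|u\|_L^2$ over $(x,u)$. Writing the subdifferential inclusion for this joint problem and using the update rule $\lambda_{k+1}=\tilde{\lambda}_{k+1}-\frac{c_k}{b_k+1}Lu_{k+1}$, I would obtain
\[
(0,\lambda_{k+1})\in\partial\varphi(x_{k+1},u_{k+1}),
\]
which is exactly the condition that $0$ lies in the joint subdifferential of the convex function $(x,u)\mapsto\varphi(x,u)-\inner{\lambda_{k+1},u}$ at $(x_{k+1},u_{k+1})$. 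Consequently $(x_{k+1},u_{k+1})$ is a joint minimizer of this function, i.e.\
\[
\inf_{x,u}\bigl[\varphi(x,u)-\inner{\lambda_{k+1},u}\bigr]=\varphi(x_{k+1},u_{k+1})-\inner{\lambda_{k+1},u_{k+1}}.
\]

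Next I would relate this joint infimum to both sides of the claimed chain. For the left equality, I would swap the order of the infima,
\[
\inf_x L(x,\lambda_{k+1})=\inf_x\inf_u\bigl[\varphi(x,u)-\inner{\lambda_{k+1},u}\bigr]=\inf_{x,u}\bigl[\varphi(x,u)-\inner{\lambda_{k+1},u}\bigr],
\]
which by the previous step equals $\varphi(x_{k+1},u_{k+1})-\inner{\lambda_{k+1},u_{k+1}}$. For the middle equality, I would note that joint minimality of $(x_{k+1},u_{k+1})$ implies minimality of $u\mapsto\varphi(x_{k+1},u)-\inner{\lambda_{k+1},u}$ at $u=u_{k+1}$, so $L(x_{k+1},\lambda_{k+1})=\varphi(x_{k+1},u_{k+1})-\inner{\lambda_{k+1},u_{k+1}}$ as well.

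The only delicate step is extracting $(0,\lambda_{k+1})\in\partial\varphi(x_{k+1},u_{k+1})$ from the joint optimality, since $\varphi$ is only assumed closed proper convex (not differentiable) and the quadratic penalty is applied only to the $u$-block. The main obstacle is therefore justifying the subdifferential decomposition $\partial\bigl[\varphi(x,u)-\inner{\tilde{\lambda}_{k+1},u}+\frac{c_k}{2(b_k+1)}\|u\|_L^2\bigr]=\partial\varphi(x,u)+\bigl(0,-\tilde{\lambda}_{k+1}+\frac{c_k}{b_k+1}Lu\bigr)$ at the optimum; this follows from the Moreau--Rockafellar sum rule because the quadratic penalty is finite-valued and continuous, so no additional constraint qualification is needed. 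Once that is in hand, the rest of the argument is a one-line reassembly.
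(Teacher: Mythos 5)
Your proposal is correct and follows essentially the same route as the paper: extract the optimality condition $(0,\lambda_{k+1})\in\partial\varphi(x_{k+1},u_{k+1})$ from the joint subproblem together with the update $\lambda_{k+1}=\tilde{\lambda}_{k+1}-\frac{c_k}{b_k+1}Lu_{k+1}$, then read off both equalities from joint minimality and the interchange $\inf_x\inf_u=\inf_{x,u}$. You are in fact more careful than the paper, which states the inclusion with loose block notation and omits the sum-rule justification; your version is a faithful tightening of the same argument.
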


\begin{proof}
	Since $(x_{k+1},u_{k+1})$ is the optimal solution of optimization subproblem, then we have
	\begin{align*}
		0&\in\partial\varphi(x_{k+1},u_{k+1})-\tilde{\lambda}_{k+1}+\frac{c_k}{b_k+1}u_{k+1}\\
		&= \partial\varphi(x_{k+1},u_{k+1})-\lambda_{k+1}.
	\end{align*}
	Then by the definition of augmented Lagrangian function, we have $L(x_{k+1},\lambda_{k+1})=\varphi(x_{k+1},u_{k+1})-\inner{\lambda_{k+1},u_{k+1}}$. Next, because $\inf L(x,\lambda_{k+1})=\inf_{x,u}\varphi(x,u)-\inner{\lambda_{k+1},u}$,  we have $\inf_xL(x,\lambda_{k+1})=L(x_{k+1},\lambda_{k+1})$.
\end{proof}

\begin{corollary}
	Let $\{x_{k+1}\}$, $\{\lambda_k\}$, $\{\tilde{\lambda}_{k+1}\}$, and $\{z_k\}$ be the sequences generated by Algorithm \ref{al:salm}. If $\varphi(\cdot,\cdot)$ is closed proper convex function and \eqref{eq:discretecondition} holds, then for all saddle point $(x^*,\lambda^*)$ of Lagrangian function $L$, i. e. \[
	L(x^*,\lambda)\leqslant L(x^*,\lambda^*)\leqslant L(x,\lambda^*),\qquad\forall x, \lambda,
	\]
	we have\[
	L(x^*,\lambda^*)-L(x_k,\lambda_k)\leqslant\frac{A_0}{A_k}[L(x^*,\lambda^*)-\inf_x L(x,\lambda_0)]+\frac{1}{2A_k}\dist_{L^{-1}}(x_0,\Omega)^2,
	\]
	where $\Omega$ denote the solution set of \eqref{eq:LD}. In addition, we have\[
	\sum_{k=0}^\infty\left(a_kc_k-\frac{a_k^2}{2}\right)\norm{u_{k+1}}^2_L\leqslant A_0[L(x^*,\lambda^*)-\inf_xL(x,\lambda_0)]+\frac{1}{2}\dist_{L^{-1}}(x_0,\Omega)^2.
	\]
\end{corollary}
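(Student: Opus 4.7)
The approach is to apply Theorem \ref{thm:discreterate} directly to the dual objective and translate back through the preceding lemma. Set $g(\lambda) := -\inf_x L(x,\lambda)$. Since $\varphi$ is closed proper convex, so is $g$ on its effective domain, and Algorithm \ref{al:salm} is, by the very construction at the start of Section \ref{sec:b}, nothing but Algorithm \ref{al:spppa} applied to $g$ with the roles of $L$ and $L^{-1}$ interchanged in the preconditioner (this is exactly the ``replacing $L^{-1}$ by $L$'' step performed just before the algorithm box). Under the hypotheses \eqref{eq:discretecondition}, Theorem \ref{thm:discreterate} therefore supplies a last-iterate bound on $g(\lambda_k)-g^*$ and the summation bound on $\sum_k (a_kc_k-a_k^2/2)\|\tilde{\nabla}g(\lambda_{k+1})\|^2$, each controlled by $A_0[g(\lambda_0)-g^*]$ plus half the squared distance from $\lambda_0$ to the dual optimum set. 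The $L\leftrightarrow L^{-1}$ swap is precisely what turns the $\dist_L$ and $\|\cdot\|_{L^{-1}}$ of Theorem \ref{thm:discreterate} into the $\dist_{L^{-1}}$ and $\|\cdot\|_L$ appearing in the present statement.

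The translation back to the Lagrangian splits into two short calculations. For the value gap, the preceding lemma gives $\inf_x L(x,\lambda_k) = L(x_k,\lambda_k)$, so $g(\lambda_k) = -L(x_k,\lambda_k)$; the saddle-point inequality $L(x^*,\lambda)\leqslant L(x^*,\lambda^*)\leqslant L(x,\lambda^*)$ likewise forces $\inf_x L(x,\lambda^*) = L(x^*,\lambda^*)$, hence $g^* = -L(x^*,\lambda^*)$. Substituting these into the last-iterate bound produces the first inequality (the ``$x_0$'' appearing in the $\dist$ term being read as the initial dual iterate $\lambda_0 = z_0$). For the residual bound, the definition $\tilde{\nabla}f(x_{k+1}) := \frac{b_k+1}{c_k}L(\tilde{x}_{k+1}-x_{k+1})$ from Section 3, transported through the $L \leftrightarrow L^{-1}$ swap, becomes $\tilde{\nabla}g(\lambda_{k+1}) = \frac{b_k+1}{c_k}L^{-1}(\tilde{\lambda}_{k+1}-\lambda_{k+1})$. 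Since the $\lambda$-update in Algorithm \ref{al:salm} gives $\tilde{\lambda}_{k+1}-\lambda_{k+1} = \frac{c_k}{b_k+1}L u_{k+1}$, this collapses to $\tilde{\nabla}g(\lambda_{k+1}) = u_{k+1}$. Feeding that into the summation bound corresponding to \eqref{eq:normsum}, whose norm is now $\|\cdot\|_L$, yields the second inequality exactly.

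The only real difficulty is the bookkeeping of the preconditioner swap: one must verify that positive-definiteness of $L^{-1}$ legitimately lets Theorem \ref{thm:discreterate} be invoked in the dual with $\|\cdot\|_{L^{-1}}$ playing the role of its primal norm, and that all subscripts on $\dist$ and on the gradient-surrogate norm invert consistently throughout. Once this is tracked, no new quantitative estimate is needed; the corollary is a change of variables on top of Theorem \ref{thm:discreterate}, combined with the preceding lemma identifying $\inf_x L(x,\lambda_{k+1})$ with $L(x_{k+1},\lambda_{k+1})$.
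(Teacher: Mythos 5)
Your proposal is correct and follows exactly the route the paper intends: Algorithm \ref{al:salm} is Algorithm \ref{al:spppa} applied to the dual objective $-\inf_x L(x,\cdot)$ with the preconditioner $L$ replaced by $L^{-1}$, so Theorem \ref{thm:discreterate} (with $\dist_L$ and $\|\cdot\|_{L^{-1}}$ swapped to $\dist_{L^{-1}}$ and $\|\cdot\|_L$) combined with the preceding lemma and the saddle-point identity $\inf_x L(x,\lambda^*)=L(x^*,\lambda^*)$ yields both inequalities, with $\tilde{\nabla}g(\lambda_{k+1})=u_{k+1}$ giving the residual sum. Your reading of the $x_0$ in the distance term as the initial dual iterate $\lambda_0=z_0$ is also the correct interpretation of the statement.
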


Our convergence result of Algorithm \ref{al:salm} extends the convergence result given in \cite{he2010}.

\section{Discussion}

In this paper, we first investigate an ODE system given by \eqref{eq:continuousmain}. Utilizing the Lyapunov function technique, we establish that the convergence rate of \eqref{eq:continuousmain} is \( O(1/A_t) \). Subsequently, by introducing additional assumptions and refining the Lyapunov function, we demonstrate the \( o(1/A_t) \) convergence rate of \eqref{eq:continuousmain}. Building upon these findings, we apply the Symplectic Euler method to \eqref{eq:continuousmain} to derive a novel proximal point algorithm for convex minimization, called the Symplectic Proximal Point Algorithm. Inspired by the analysis conducted for \eqref{eq:continuousmain}, we initially employ a discrete-time Lyapunov function to prove the \( O(1/A_k) \) convergence rate of Symplectic Proximal Point Algorithm. Additionally, by introducing further assumptions and modifying the Lyapunov function, we are able to prove the \( o(1/A_k) \) convergence rate of Symplectic Proximal Point Algorithm.

Several valuable research questions remain open. Firstly, the proximal point algorithm has a more generalized form known as the Bregman proximal point algorithm. In \cite{lei2022}, the authors drew inspiration from the NAG method and proposed a novel accelerated Bregman proximal point algorithm. It would be of interest to develop a new ODE system for the accelerated Bregman proximal point algorithm and employ the Symplectic Euler method to derive a novel accelerated Bregman Proximal Point Algorithm. Furthermore, the optimization subproblems that arise in the Symplectic Proximal Point Algorithm may not always be solvable exactly. Therefore, developing an inexact variant of the Symplectic Proximal Point Algorithm is essential for practical applications.

\appendix
\section{Auxiliary results}
In this section, we present some auxiliary results used in the proof for our results. 
\begin{lemma}[Lemma 5.1 in \cite{abbas2014}]
	\label{lem:existencelimit}
	Let $\varepsilon>0$ be a constant. Suppose that $f\in[\varepsilon,\infty)$ is locally absolutely continuous, bounded from
	below, and there exists $g\in\mathbb{L}^1([\varepsilon,\infty))$ such that for almost every $t\geqslant\varepsilon$\[
	\frac{\dd}{\dd t}f(t)\leqslant g(t).
	\]
	Then the limit $\lim\limits_{t\to\infty}f(t)$ exists.
\end{lemma}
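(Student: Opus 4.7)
The plan is to reduce the claim to the classical fact that a monotone, bounded-from-below function has a limit at infinity. The natural auxiliary quantity is
\[
h(t) := f(t) - \int_{\varepsilon}^{t} g(s)\, \mathrm{d}s.
\]
Because $f$ is locally absolutely continuous on $[\varepsilon,\infty)$ and $g\in\mathbb{L}^1([\varepsilon,\infty))$, so is $h$, and at almost every $t\geqslant\varepsilon$ we may differentiate to obtain
\[
\frac{\mathrm{d}}{\mathrm{d}t}h(t) = \frac{\mathrm{d}}{\mathrm{d}t}f(t) - g(t) \leqslant 0
\]
by hypothesis. Local absolute continuity plus a non-positive derivative almost everywhere implies $h$ is non-increasing on $[\varepsilon,\infty)$.

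Next I would show $h$ is bounded from below. Since $g\in\mathbb{L}^1([\varepsilon,\infty))$, the integral $\int_{\varepsilon}^{t} g(s)\, \mathrm{d}s$ is uniformly bounded in $t$; combined with the hypothesis that $f$ is bounded from below, this shows $\inf_{t\geqslant\varepsilon} h(t) > -\infty$. Hence $h$ is a non-increasing function bounded below, so $\lim_{t\to\infty} h(t)$ exists and is finite.

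To conclude, observe that absolute integrability of $g$ gives existence of $\lim_{t\to\infty}\int_{\varepsilon}^{t} g(s)\, \mathrm{d}s = \int_{\varepsilon}^{\infty} g(s)\, \mathrm{d}s$, so
\[
\lim_{t\to\infty} f(t) = \lim_{t\to\infty} h(t) + \int_{\varepsilon}^{\infty} g(s)\, \mathrm{d}s
\]
exists. The argument is essentially routine; the only subtle points are (i) justifying that one may differentiate $h$ almost everywhere by invoking the fundamental theorem of calculus for locally absolutely continuous functions, and (ii) handling the fact that $g$ need not be non-negative, which is what makes the boundedness of $\int_{\varepsilon}^{t} g$ require absolute integrability rather than mere integrability. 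Neither obstacle is serious, so I do not expect any genuine difficulty in the proof.
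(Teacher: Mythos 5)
Your proof is correct. Note that the paper itself does not prove this lemma at all --- it is stated in the appendix as an imported auxiliary result, cited directly as Lemma 5.1 of \cite{abbas2014} --- so there is no in-paper argument to compare against; your reduction via $h(t) = f(t) - \int_{\varepsilon}^{t} g(s)\,\dd s$ (non-increasing, bounded below, plus convergence of $\int_{\varepsilon}^{t} g$) is the standard proof given in that reference, and your two flagged subtleties (the fundamental theorem of calculus for locally absolutely continuous functions, and absolute integrability of $g$ to bound $\int_{\varepsilon}^{t} g$ uniformly) are exactly the right ones.
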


\begin{lemma}[Lemma A.2 in \cite{attouch2016fast}]
	\label{lem:tlimitexist}
	Let $a>0$ and $q: [t_0,\infty)$ be a continuously differentiable function such that\[
	\lim_{t\to\infty}q(t)+\frac{t}{a}\dot{q}(t)=l\in\hh.
	\]
	Then $\lim\limits_{t\to\infty}q(t)=l$.
\end{lemma}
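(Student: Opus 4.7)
The plan is to treat the hypothesis as a first-order linear ODE for $q$ with forcing term $g(t) := q(t) + \frac{t}{a}\dot{q}(t)$, which by assumption satisfies $g(t) \to l$ as $t \to \infty$. The natural integrating factor is $t^a$, since
\[
\frac{d}{dt}\bigl(t^a q(t)\bigr) = a t^{a-1} q(t) + t^a \dot{q}(t) = a t^{a-1} g(t).
\]
Integrating from $t_0$ to $t$ (where we may assume $t_0 > 0$) and dividing by $t^a$ yields the explicit representation
\[
q(t) = \left(\frac{t_0}{t}\right)^{a} q(t_0) + \frac{a}{t^a}\int_{t_0}^t s^{a-1}\, g(s)\,ds.
\]

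From here the conclusion follows by a Ces\`aro-type limit argument. Because $a > 0$, the boundary term $(t_0/t)^a q(t_0)$ tends to $0$. For the integral term, I would decompose $g(s) = l + (g(s) - l)$: the $l$-contribution evaluates cleanly to $(1 - (t_0/t)^a)\,l$, which converges to $l$. For the remainder, given $\varepsilon > 0$, pick $T \geqslant t_0$ with $\|g(s) - l\| < \varepsilon$ for $s \geqslant T$, and split the integral at $T$; the piece over $[t_0,T]$ is a fixed constant divided by $t^a$ (hence vanishes), while the piece over $[T,t]$ has norm bounded by $\varepsilon\,(1 - (T/t)^a) \leqslant \varepsilon$. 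Sending $\varepsilon \to 0$ then forces $q(t) \to l$.

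The only real subtlety is the order of operations in the $\varepsilon$-argument: since $\int_{t_0}^\infty s^{a-1}\,ds = \infty$, one must divide by $t^a$ \emph{before} splitting the integral at $T$, rather than afterwards. Apart from this minor bookkeeping, the proof is a short and standard application of the integrating-factor method, and it transfers verbatim from the scalar setting to a Hilbert-space-valued $q$ by using $\|\cdot\|$ in place of $|\cdot|$ throughout, with the Bochner integral playing the role of the usual integral.
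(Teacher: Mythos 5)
Your proof is correct. The paper itself gives no proof of this lemma---it is imported verbatim as Lemma A.2 of the cited reference \cite{attouch2016fast}---so there is nothing internal to compare against; your integrating-factor representation $q(t)=(t_0/t)^a q(t_0)+\frac{a}{t^a}\int_{t_0}^t s^{a-1}g(s)\,ds$ followed by the split of $g-l$ at a threshold $T$ is precisely the standard argument used in that reference, and your handling of the two genuine subtleties (reducing to $t_0>0$ so that $t^a$ is well defined, and dividing by $t^a$ before splitting the divergent integral) is sound. The extension to Hilbert-space-valued $q$ via the Bochner integral and the norm is routine, as you say.
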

\begin{lemma}[Lemma A.5 in \cite{bot2023ogda}]
	\label{lem:klimitexist}
	Let $a\geqslant 1$ and $\{q_k\}$ be a bounded sequence in $\hh$ such that \[
	\lim_{k\to\infty}q_k+\frac{k}{a}(q_{k+1}-q_k)=l\in\hh.
	\]
	Then it holds $\lim\limits_{k\to\infty}q_k=l$.
\end{lemma}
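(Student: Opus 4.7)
The plan is to recognize the hypothesis as a classical discrete averaging (Toeplitz/Ces\`aro) recursion and then prove the corresponding convergence result directly. First I would set $p_k := q_k + (k/a)(q_{k+1} - q_k)$, so that the hypothesis reads $p_k \to l$, and solve the definition for $q_{k+1}$:
\[
q_{k+1} = \left(1 - \tfrac{a}{k}\right) q_k + \tfrac{a}{k}\, p_k.
\]
For $k \geq \lceil a \rceil$ both coefficients lie in $[0,1]$, and the weights $a/k$ have divergent sum; this is exactly the setting of the classical weighted Ces\`aro / Toeplitz lemma, so the strategy is to peel the recursion and show that the resulting convex-average of $p_k$'s converges to $l$.

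Letting $r_k := q_k - l$ and $s_k := p_k - l \to 0$, the recurrence becomes $r_{k+1} = (1 - a/k) r_k + (a/k) s_k$. Iterating from some fixed $k_0 \geq \lceil a \rceil + 1$ gives the closed form
\[
r_n = \left[\prod_{j=k_0}^{n-1}\!\left(1 - \tfrac{a}{j}\right)\right] r_{k_0} + \sum_{j=k_0}^{n-1} \tfrac{a}{j}\, s_j \prod_{i=j+1}^{n-1}\!\left(1 - \tfrac{a}{i}\right),
\]
and the goal is to show that both summands tend to $0$. Using $\log(1 - a/i) = -a/i + O(i^{-2})$ (valid once $i > a$) together with $\sum_{i=k_0}^{n-1} 1/i = \log n - \log k_0 + O(1)$, I would derive the asymptotic $\prod_{i=j+1}^{n-1}(1 - a/i) \asymp (j/n)^a$, up to a multiplicative constant that is uniform in $j$. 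In particular the first term is of order $(k_0/n)^a \to 0$ since $a \geq 1$.

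For the second (weighted-sum) term the same product estimate reduces it, up to a bounded factor, to $n^{-a}\sum_{j=k_0}^{n-1} a\, j^{a-1}\, s_j$. Since $\sum_{j=1}^{n} a\, j^{a-1} \sim n^a$, a standard Toeplitz splitting finishes the job: given $\varepsilon > 0$, pick $N$ so that $\|s_j\| < \varepsilon$ for $j \geq N$; the head $j < N$ is bounded in norm by a fixed constant (using that $\{s_j\}$ is itself bounded, which follows from boundedness of $\{q_k\}$ together with convergence of $\{p_k\}$), hence contributes $O(N^a/n^a) \to 0$ after multiplication by $n^{-a}$, while the tail is bounded by $\varepsilon$ times a uniform constant. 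Letting $n \to \infty$ and then $\varepsilon \to 0$ yields $r_n \to 0$, i.e.\ $q_n \to l$.

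The main obstacle I anticipate is producing the product asymptotic $\prod_{i=j+1}^{n-1}(1 - a/i) \asymp (j/n)^a$ with a constant uniform in $j$, which is what lets the two pieces be compared cleanly; the logarithmic expansion above handles this directly, and the Gamma-function identity $\prod_{i=j+1}^{n-1}(i-a)/i = \Gamma(n-a)\Gamma(j+1)/[\Gamma(n)\Gamma(j+1-a)]$ combined with Stirling gives an alternative route. The boundedness hypothesis on $\{q_k\}$ is used only at the very end, to ensure that $\{s_j\}$ is bounded so that the head segment of the weighted sum is controlled.
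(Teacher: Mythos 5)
Your argument is correct, but note that the paper itself gives no proof of this statement: Lemma~\ref{lem:klimitexist} is imported verbatim as Lemma~A.5 of \cite{bot2023ogda}, so there is no in-paper argument to compare against. What you supply is a self-contained, elementary proof: you solve the hypothesis for $q_{k+1}$ to get the convex recursion $q_{k+1}=(1-a/k)q_k+(a/k)p_k$, peel it by discrete variation of constants, establish the uniform product asymptotic $\prod_{i=j+1}^{n-1}(1-a/i)\asymp (j/n)^a$ via the logarithmic expansion, and finish with a weighted Ces\`aro/Toeplitz splitting. Every step checks out: the closed form for $r_n$ is the correct unrolling, the $O(i^{-2})$ error sum is uniformly bounded once $k_0>a$ so the multiplicative constants are uniform in $j$, and the head/tail split is standard. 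This is exactly the discrete counterpart of the integrating-factor proof of the continuous version (Lemma~\ref{lem:tlimitexist}, where one differentiates $t^a q(t)$), with the exact integrating factor replaced by an asymptotic estimate of the product. Two minor observations: the boundedness hypothesis on $\{q_k\}$ is not actually needed in your argument, since $s_j=p_j-l$ is bounded merely because $p_j$ converges (a convergent sequence is bounded), so you in fact prove a slightly stronger statement; and the decay of the first term $\prod_{j=k_0}^{n-1}(1-a/j)\,r_{k_0}=O(n^{-a})$ requires only $a>0$, not $a\geqslant 1$ --- the restriction $a\geqslant 1$ enters only in making the comparison $\sum_{j\leqslant n}aj^{a-1}\leqslant n^a$ clean via monotonicity of $x^{a-1}$, and is harmless since it is part of the hypothesis.
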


\bibliographystyle{amsplain}
\bibliography{reference}

\end{document}